\newtheorem{theorem}{Theorem}[section]
\newtheorem{lemma}[theorem]{Lemma}
\theoremstyle{definition}
\newtheorem{example}[theorem]{Example}
\theoremstyle{remark}
\newtheorem{remark}[theorem]{Remark}
\numberwithin{equation}{section}
\theoremstyle{Corollary}
\newtheorem{Corollary}[theorem]{Corollary}
\theoremstyle{Proposition}
\theoremstyle{Conjecture}
\newtheorem{Conjecture}[theorem]{Conjecture}
\begin{document}
\title{Orthogonal polynomials for the weakly equilibrium Cantor sets}
\author{G\H{o}kalp Alpan}
\address{Department of Mathematics, Bilkent University, 06800 Ankara, Turkey}

\email{gokalp@fen.bilkent.edu.tr}
\thanks{}

%    author two information
\author{Alexander Goncharov}
\address{Department of Mathematics, Bilkent University, 06800 Ankara, Turkey}

\email{goncha@fen.bilkent.edu.tr}
\thanks{}

%    \subjclass is required.
\subjclass[2010]{Primary 42C05, 47B36; Secondary 31A15.}

%\date{}

%\dedicatory{}

%    "Communicated by" -- provide editor's name; required.
%\commby{}

%    Abstract is required.
\begin{abstract}
	Let $K(\gamma)$ be the weakly equilibrium Cantor type set introduced in \cite{gonc}. It is proven that
	the monic orthogonal polynomials  $Q_{2^s}$ with respect to the equilibrium measure of $K(\gamma)$ coincide with the Chebyshev polynomials of the set. Procedures are suggested to find  $Q_{n}$ of all degrees
	and the corresponding Jacobi parameters. It is shown that the sequence of the Widom factors is bounded below.
\end{abstract}

\maketitle
\section{Introduction}

Cantor sets appear as supports of spectral measures for the Jacobi operators in some natural situations.
For example, the Schr\"odinger operator with an almost periodic potential (for example, the almost Mathieu operator)
typically has such spectrum (see e.g. the review \cite{simon2}). On the other hand, we can meet Julia sets $K$ of the Cantor type in the theory of orthogonal polynomials (OP) with respect to the equilibrium measure $\mu_K$ (see e.g \cite{Barnsley3,Barnsley4,Belissard}).
Whereas, due to B. Simon et al., there is a comprehensive theory for the finite gap Jacobi matrices, until now
there is no such theory for purely singular continuous measures. In order to analyze the infinite gap Jacobi matrices, the class of Parreau-Widom sets was suggested (see e.g. \cite{christiansen}). This notion includes Cantor sets of positive Lebesgue measure, so the Szeg\"o condition can be applied in this case. Recently numerous attempts were undertaken to investigate the spectral problem for self-similar measures with Cantor supports. By means of computational methods,
some conjectures (see e.g. \cite{Heilman,kruger}) about the asymptotic behavior of the Jacobi parameters and other related values were posed.

Here we consider one more example of a family of OP with respect to $\mu_{K(\gamma)}$
on the Cantor set $K(\gamma),$ introduced in \cite{gonc}. The set  $K(\gamma)$ depends on a parameter
$\gamma =(\gamma_s)_{s=1}^\infty$ and is constructed by means of a recurrent procedure. If $\gamma_s$ are not very small then the set $K(\gamma)$ is not polar.
At least in known cases, the set  $K(\gamma)$ is dimensional and, by \cite{alpan}, $\mu_{K(\gamma)}$ is
mutually absolutely continuous with the corresponding Cantor-Hausdorff measure. This is not valid
for geometrically symmetric zero Lebesgue measure Cantor sets, where, by \cite{Makarov} and followers, these measures are mutually singular.

In Section 2 we show that the monic OP $Q_{2^s}$ coincide with the corresponding Chebyshev polynomials.
In Sections 3 and 4 we suggest a procedure to find $Q_{n}$ for $n\ne 2^s.$ This allows to analyze
the asymptotics of the Jacobi parameters $(a_n)_{n=1}^{\infty}$.
Since $Cap(K(\gamma))$ is known, we estimate (Section 5) the {\it Widom factors}
$ W_n := \frac{a_1\cdots a_n}{Cap(K(\gamma))^n}$ and check the Widom condition, that
characterizes the Szeg\H{o} class of Jacobi matrices in the finite gap case.
In the last section we discuss a possible version of the Szeg\H{o} condition for
singular continuous measures. At least for $\gamma_s\leq 1/6, s\in \mathbb{N},$
the Lebesgue measure of the set  $K(\gamma)$ is zero, so it is not a Parreau-Widom set.

For the basic concepts of the theory of logarithmic potential see e.g \cite{ransford}, $\log$ denotes the natural
logarithm, $Cap(\cdot)$ stands for the logarithmic capacity, $0^0:=1.$

\section{Orthogonal Polynomials}

Given a sequence $\gamma =(\gamma_s)_{s=1}^\infty$ with $0<\gamma_s<1/4$ define $r_0=1$ and $r_s=\gamma_s r_{s-1}^2$. Let
\begin{equation}\label{poly1}
P_1:=x-1 \,\,\,\mbox{and} \,\,\, P_{2^{s+1}}(x):= P_{2^s}(x)\cdot(P_{2^s}(x)+r_s)
\end{equation}
for $s\in  \mathbb{N}_0$ in a recursive fashion.
Thus, $P_2(x)=x\cdot(x-1)$ for each  $\gamma,$ whereas, for $s\geq 2,$ the polynomial $P_{2^s}$ essentially
depends on the parameter $\gamma.$
For $s\in  \mathbb{N}_0$ consider
$$
\label{roro}E_s = \{x\in\mathbb{R}:\,  P_{2^{s+1}}(x)\leq 0\}= \left(\frac{2}{r_s}P_{2^s}+1\right)^{-1}\left([-1,1]\right) = \cup_{j=1}^{2^s} I_{j,s},
$$
where $\mathbb{N}_0:=\mathbb{N}\cup \{0\}$ and $I_{j,s}$ are closed {\it basic} intervals of the $s-$th level which are necessarily disjoint. Then $E_{s+1}\subset E_{s}$ and the set $K(\gamma):=\cap_{s=0}^{\infty}E_s$ is a Cantor set by Lemma 2 in \cite{gonc}.

Let $l_{i,s}$ stand for the length of $I_{j,s}$ when we enumerate them from the left to the right.
By Lemma 6 in \cite{gonc},
$$\gamma_1\ldots\gamma_s<l_{i,s}<\exp{\left(16\sum_{k=1}^s \gamma_k\right)}\gamma_1\ldots\gamma_s,\,\,\,\,\,\, 1\leq i\leq 2^s,$$
provided $\gamma_k\leq 1/32$ for all $k.$ Therefore, the Lebesgue measure $|K(\gamma)|$ of the set with this condition is zero and, by \cite{christiansen}, $K(\gamma)$ is not a Parreau-Widom set. For the definition of Parreau-Widom sets see e.g. \cite{christiansen}. In Section 4 we will show that $|K(\gamma)|=0$ as well if $\gamma_k\leq 1/6$ for all $k.$

On the other hand, by choosing $(\gamma_k)_{k=1}^\infty$ sufficiently close to $1/4$, we can obtain Cantor sets with positive Lebesgue measure.  What is more, in the limit case, when all $\gamma_k =1/4,$ we get $E_s=[0,1]$ for all $s$ and  $K(\gamma)=[0,1]$ (see Example 1 in  \cite{gonc}).

In addition, by Corollary 1 in  \cite{gonc}, $Cap(K(\gamma))=\exp{\left(\sum_{k=1}^{\infty} 2^{-k}\log{\gamma_k} \right)}.$ In the paper we assume $Cap(K(\gamma))>0.$
Let  $\mu_{K(\gamma)}$ denote the equilibrium measure on the set, $||\cdot||$ be the norm in the
corresponding Hilbert space.
From Corollary 3.2 in \cite{alpan} we have $\mu_{K(\gamma)}(I_{j,s})=2^{-s}$ for all $s$ and $ 1\leq j\leq 2^s,$
provided $\gamma_k\leq 1/32$ for all $k.$

From now on, by $Q_n$ we denote the monic orthogonal polynomial of degree $n\in \mathbb{N}$ with respect to
$\mu_{K(\gamma)}.$ The main result of this section is that, for $n=2^s$ with $s\in  \mathbb{N}_0,$ the polynomial
$Q_n$ coincides with the corresponding Chebyshev polynomial for $K(\gamma).$
The next two theorems will play a crucial role.

\begin{theorem}[{\cite{gonc}}, Prop.1] \label{cheb} For each $s\in  \mathbb{N}_0$ the polynomial $P_{2^s}+r_s/2$ is the Chebyshev polynomial for $K(\gamma).$
\end{theorem}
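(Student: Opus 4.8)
The plan is to invoke the Chebyshev alternation (equioscillation) characterization: a monic polynomial $t$ of degree $n$ is \emph{the} Chebyshev polynomial of a compact set $F\subset\mathbb{R}$ (with at least $n+1$ points) as soon as there are $n+1$ points $x_0<\dots<x_n$ in $F$ at which $t$ attains the values $\pm\|t\|_F$ with alternating signs; the sufficiency direction is the elementary sign-change argument, namely that if some monic $q$ of degree $n$ had a smaller norm then $t-q$ would change sign $n$ times while having degree $<n$. Accordingly, writing $T_s:=P_{2^s}+r_s/2$, I must show that $T_s$ is monic of degree $2^s$, that $\|T_s\|_{K(\gamma)}=r_s/2$, and that $T_s$ alternates between $\pm r_s/2$ at $2^s+1$ points of $K(\gamma)$.

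First I would record, by induction on the recursion \eqref{poly1}, that $P_{2^s}$ is monic of degree $2^s$, so $T_s$ is too. It is convenient to pass to $u_s:=\tfrac{2}{r_s}P_{2^s}+1$, so that $T_s=\tfrac{r_s}{2}u_s$ and, by the description of $E_s$ in the text, $E_s=u_s^{-1}([-1,1])$. From $P_{2^{s+1}}=P_{2^s}(P_{2^s}+r_s)\le 0$ on $E_s$ one gets $-r_s\le P_{2^s}\le 0$, hence $|T_s|\le r_s/2$ on $E_s\supset K(\gamma)$; thus $\|T_s\|_{K(\gamma)}\le r_s/2$. A short computation turns \eqref{poly1} into the quadratic recursion
\[ u_{s+1}=\frac{1}{2\gamma_{s+1}}\bigl(u_s^2-1\bigr)+1, \]
from which I would prove, by induction, that $u_s$ is strictly monotone on each basic interval $I_{j,s}$ and maps it bijectively onto $[-1,1]$ (the hypothesis $\gamma_{s+1}<1/4$ is exactly what forces the minimum of $u_{s+1}$ to drop below $-1$, splitting each band into two). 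Consequently $u_s=\pm1$ precisely at the $2^{s+1}$ endpoints of the intervals $I_{j,s}$, with opposite values at the two endpoints of any single interval.

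Next I would locate the alternation set inside $K(\gamma)$. Since $u_{s+1}=1$ at every point where $u_s=\pm1$, each endpoint of a level-$s$ interval lies in $E_{s+1}$, and by iterating it lies in every $E_t$ with $t\ge s$, hence in $K(\gamma)$. Ordering all endpoints from left to right, the sign of $u_s$ changes between the two endpoints of a common interval (monotonicity) but cannot change across a gap: if the two endpoints flanking a gap carried opposite signs of $u_s$, the intermediate value theorem would place points of $u_s^{-1}([-1,1])=E_s$ inside the gap, a contradiction. Hence the endpoints split into exactly $2^s+1$ maximal runs of constant sign, consecutive runs having opposite signs; picking one endpoint from each run produces $2^s+1$ points of $K(\gamma)$ at which $T_s$ takes the values $\pm r_s/2$ in alternating order. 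In particular $\|T_s\|_{K(\gamma)}=r_s/2$ is attained, and the alternation hypothesis of the first paragraph is satisfied.

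Applying the sufficiency direction of the alternation theorem then identifies $T_s=P_{2^s}+r_s/2$ as the Chebyshev polynomial of $K(\gamma)$. I expect the only real work to lie in the middle step: verifying that $u_s$ sweeps $[-1,1]$ monotonically on each of the $2^s$ bands (equivalently, that $E_s$ really has $2^s$ components, each mapped bijectively by the degree-$2^s$ polynomial $u_s$), which is where the admissibility of the construction, and the bound $\gamma_s<1/4$, enters. The endpoint-persistence and the run-counting are then routine, and the concluding alternation argument is standard.
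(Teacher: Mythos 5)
Your proof is correct and is essentially the approach the paper itself indicates: the statement is quoted from \cite{gonc}, and the second remark following it justifies the Chebyshev property by exactly the alternance argument you use, observing that the alternating set of $P_{2^s}+r_s/2$ consists of $2^s+1$ points. Your proposal merely fills in the supporting details (the band structure of $E_s$ via the recursion for $u_s$, the persistence of band endpoints into $K(\gamma)$, and the sign-run count) that the paper delegates to the reference.
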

\begin{remark}  Only the values $s\in  \mathbb{N}$ were considered in \cite{gonc}. But, clearly, for $s=0$
the polynomial $P_1+1/2=x-1/2$ is Chebyshev.
\end{remark}
\begin{remark}
Since real polynomials are considered here and the alternating set for $P_{2^s}+r_s/2$ consists of $2^s+1$
points, the Chebyshev property of this polynomial follows by the alternance argument.
\end{remark}

\begin{theorem}[{\cite{saff}}, T.3.6]\label{totcheb} Let $K\subset\mathbb{R}$ be a non-polar compact set. Then the normalized counting measures on the zeros of the Chebyshev polynomials converge to the equilibrium measure of $K$ in the weak-star topology.
\end{theorem}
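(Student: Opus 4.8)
The plan is to argue entirely within logarithmic potential theory. Write $T_n$ for the monic Chebyshev polynomial of $K$ of degree $n$, put $t_n:=\|T_n\|_K$ for the $n$-th Chebyshev number, and let $\nu_n:=\frac1n\sum_{T_n(z)=0}\delta_z$ (zeros counted with multiplicity) be the normalized zero counting measure. For a unit Borel measure $\sigma$ of compact support set $U^\sigma(z):=\int\log\frac1{|z-t|}\,d\sigma(t)$, so that $U^{\nu_n}(z)=-\frac1n\log|T_n(z)|$. Let $\mu_K$ be the equilibrium measure and $V_K:=\log\frac1{Cap(K)}$ the Robin constant; by Frostman's theorem $U^{\mu_K}\le V_K$ on $\mathbb{C}$ with equality q.e.\ on $K$. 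The one external ingredient I would invoke is the classical Fekete--Szeg\H{o} identity $t_n^{1/n}\to Cap(K)$, that is, $-\frac1n\log t_n\to V_K$.

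First I would reduce the claim to the identification of limit points. Since $K\subset\mathbb{R}$, a standard reflection/contraction argument shows that every zero of $T_n$ is real and lies in the convex hull $[\min K,\max K]$; hence the $\nu_n$ are supported in one fixed compact set and form a weak-\* precompact family, and it suffices to show that every weak-\* limit $\nu$ of a subsequence $\nu_{n_k}$ equals $\mu_K$. The lower potential bound is then immediate: on $K$ we have $|T_n|\le t_n$, so $U^{\nu_n}(z)\ge-\frac1n\log t_n$ for $z\in K$, and letting $k\to\infty$ with the Fekete--Szeg\H{o} identity gives $\liminf_k U^{\nu_{n_k}}(z)\ge V_K$ for every $z\in K$. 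The lower envelope theorem (see \cite{ransford}) identifies $\liminf_k U^{\nu_{n_k}}$ with $U^\nu$ outside a polar set, whence $U^\nu\ge V_K$ q.e.\ on $K$.

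Finally I would identify $\nu$. Integrating the last inequality against $\mu_K$ (which ignores polar sets) and using Fubini together with $U^{\mu_K}\le V_K$ yields
\[
V_K\le\int U^\nu\,d\mu_K=\int U^{\mu_K}\,d\nu\le V_K,
\]
so equality holds throughout and $U^{\mu_K}=V_K$ $\nu$-a.e. Because $U^{\mu_K}<V_K$ at every point of $\mathbb{R}\setminus K$ (there the Green function of $\mathbb{C}\setminus K$ is strictly positive), this forces $\nu(\mathbb{R}\setminus K)=0$, i.e.\ $\mathrm{supp}\,\nu\subseteq K$. Now $U^\nu\ge V_K=U^{\mu_K}$ q.e.\ on $K\supseteq\mathrm{supp}\,\mu_K$, and since $\mu_K$ has finite energy the domination principle (Maria's theorem, \cite{ransford}) upgrades this to $U^\nu\ge U^{\mu_K}$ on all of $\mathbb{C}$. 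The nonnegative function $w:=U^\nu-U^{\mu_K}$ is then harmonic on $\mathbb{C}\setminus K$, vanishes at infinity, and satisfies $\int w\,d\mu_K=0$, hence $w=0$ $\mu_K$-a.e.; a minimum-principle argument on $\mathbb{C}\setminus K$ gives $w\equiv0$, and the unicity theorem for logarithmic potentials yields $\nu=\mu_K$.

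I expect the main obstacle to be exactly this last identification. A one-sided bound $U^\nu\ge V_K$ q.e.\ on $K$ does not by itself determine $\nu$, and closing the argument requires the full strength of the domination and unicity machinery, together with care in passing from an $\mu_K$-a.e.\ statement to an everywhere statement for $w$. By contrast, verifying the Fekete--Szeg\H{o} input and the reality and location of the Chebyshev zeros is routine.
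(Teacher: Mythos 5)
The paper offers no proof of this statement at all: it is imported verbatim from Saff--Totik \cite{saff} as a black box (it is one of the two external pillars, together with the Chebyshev property of $P_{2^s}+r_s/2$, on which Theorem \ref{main1} rests). So there is no internal argument to compare against; what you have written is a self-contained potential-theoretic proof, and it is essentially the standard one for asymptotically extremal polynomials on real sets. Your overall architecture is sound: reality of the zeros and confinement to $[\min K,\max K]$ give precompactness; $|T_n|\le t_n$ on $K$ plus $t_n^{1/n}\to Cap(K)$ plus the lower envelope theorem give $U^\nu\ge V_K$ q.e.\ on $K$ for any weak-star limit $\nu$; the Fubini sandwich forces $\int U^{\mu_K}\,d\nu=V_K$, and strict positivity of the Green function on $\mathbb{C}\setminus K$ (which is connected precisely because $K\subset\mathbb{R}$) pins $\mathrm{supp}\,\nu$ inside $K$; domination then gives $U^\nu\ge U^{\mu_K}$ everywhere. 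You also correctly sense that the hypothesis $K\subset\mathbb{R}$ is not decorative: for the closed unit disk $T_n(z)=z^n$ and $\nu_n=\delta_0\ne\mu_K$, and your proof uses reality in exactly the three places where that counterexample would break it (reality of zeros, connectivity of the complement, and, implicitly, zero planar area of $K$).

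The one step stated too loosely is the final identification $w\equiv 0$. The clause ``$\int w\,d\mu_K=0$, hence $w=0$ $\mu_K$-a.e., and a minimum-principle argument gives $w\equiv 0$'' does not work as written: knowing boundary values $\mu_K$-a.e.\ on a possibly irregular compact set does not control a harmonic function on $\mathbb{C}\setminus K$ without further regularity, and $w$ need not be continuous up to $K$. The correct (and simpler) mechanism, available from what you already have, is this: since both measures are unit measures, $w(z)=U^\nu(z)-U^{\mu_K}(z)=O(|z|^{-1})$ near infinity, so $w$ extends harmonically to $\infty$ with value $0$; the set $(\mathbb{C}\cup\{\infty\})\setminus K$ is open and connected because $K\subset\mathbb{R}$; and the domination principle gives $w\ge 0$ on all of it. Thus $w$ attains its minimum $0$ at the \emph{interior} point $\infty$, and the strong minimum principle forces $w\equiv 0$ on $\mathbb{C}\setminus K$. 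Since $K\subset\mathbb{R}$ has zero planar Lebesgue measure, $U^\nu=U^{\mu_K}$ a.e.\ with respect to area, and the unicity theorem for logarithmic potentials (see \cite{ransford}) yields $\nu=\mu_K$. With that repair --- discarding the superfluous $\int w\,d\mu_K=0$ detour --- your proof is complete and correct.
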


For $s\in  \mathbb{N},$ the polynomial $P_{2^s}+{r_s}\mathbin{/}{2}$ has simple real zeros $(x_k)_{k=1}^{2^s}$
which are symmetric about $x=1/2$. Let us denote by $\nu_s$  the normalized counting measure at these points,
that is $\nu_s=2^{-s}\sum_{k=1}^{2^s} \delta_{x_k}.$

\begin{lemma} \label{lem1} Let $s>m$ with $s,m\in \mathbb{N}_0$. Then
$ \int\left(P_{2^m}+\frac{r_m}{2}\right)d\nu_s=0.$

\end{lemma}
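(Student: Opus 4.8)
The plan is to reduce the statement to a purely algebraic identity about sums of roots. Since $\nu_s$ is the normalized counting measure on the $2^s$ simple zeros $x_1,\dots,x_{2^s}$ of $P_{2^s}+r_s/2$ (and these are \emph{all} of its zeros), I would first write
\[
\int\Bigl(P_{2^m}+\tfrac{r_m}{2}\Bigr)\,d\nu_s=\frac{1}{2^s}\sum_{k=1}^{2^s}P_{2^m}(x_k)+\frac{r_m}{2},
\]
so that the claim is equivalent to $\sum_{k=1}^{2^s}P_{2^m}(x_k)=-2^{s-1}r_m$.

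The key observation is that the recursion \eqref{poly1} presents $P_{2^s}$ as an iterated composition. Setting $\phi_k(y):=y^2+r_ky$, the relation $P_{2^{k+1}}=P_{2^k}(P_{2^k}+r_k)=\phi_k(P_{2^k})$ gives, for $s>m$,
\[
P_{2^s}=\phi_{s-1}\circ\cdots\circ\phi_m\circ P_{2^m}=:\Phi\circ P_{2^m},
\]
where $\Phi$ is a \emph{monic} one-variable polynomial of degree $2^{s-m}$. Factoring $\Phi+r_s/2=\prod_{j=1}^{2^{s-m}}(y-w_j)$ over its roots $w_j$ yields the polynomial identity $P_{2^s}+r_s/2=\prod_j\bigl(P_{2^m}-w_j\bigr)$. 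Thus the $2^s$ zeros $x_k$ split, with multiplicity, into the zeros of the factors $P_{2^m}-w_j$; since $\deg P_{2^m}=2^m$ and $P_{2^m}$ equals $w_j$ at each of the $2^m$ zeros of $P_{2^m}-w_j$, regrouping the sum gives
\[
\sum_{k=1}^{2^s}P_{2^m}(x_k)=\sum_{j=1}^{2^{s-m}}\sum_{x:\,P_{2^m}(x)=w_j}w_j=2^m\sum_{j=1}^{2^{s-m}}w_j.
\]

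It then remains to compute $\sum_j w_j$, which equals the sum of the roots of $\Phi$ (the added constant $r_s/2$ does not affect the subleading coefficient). I would do this recursively by tracking only the top two coefficients through the composition: if $\Phi$ is monic with $y$-subleading coefficient $b$, so that its root sum is $-b$, then $\phi_k(\Phi)=\Phi^2+r_k\Phi$ is again monic and has subleading coefficient $2b$, because the term $r_k\Phi$ is of too low degree to interfere whenever $\deg\Phi\ge2$. Starting from $\phi_m(y)=y^2+r_my$ (root sum $-r_m$) and applying the $s-m-1$ outer maps $\phi_{m+1},\dots,\phi_{s-1}$, each of which doubles the root sum, I obtain $\sum_j w_j=-2^{s-m-1}r_m$. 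Substituting back, $\sum_k P_{2^m}(x_k)=2^m\cdot(-2^{s-m-1}r_m)=-2^{s-1}r_m$, and the $+r_m/2$ cancels, which proves the lemma.

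The main point requiring care is the bookkeeping in this last step: one must check that $\Phi$ stays monic and that the lower-order term $r_k\Phi$ never disturbs the subleading coefficient at each composition, which holds because the inner polynomial always has degree at least $2$ (only the base value $-r_m$ of $\phi_m$ is read off directly). The regrouping of the root sum is clean once one notes that the $x_k$ are exactly all $2^s$ simple zeros of $P_{2^s}+r_s/2$, so the sum over the support of $\nu_s$ coincides with the sum over all roots counted with multiplicity and no contribution is lost.
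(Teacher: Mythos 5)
Your proof is correct, but it takes a genuinely different route from the paper's. The paper argues top-down: it views $P_{2^s}+r_s/2=0$ as a quadratic in $P_{2^{s-1}}$, factors it into two real linear factors using positivity of the discriminant (this is where $r_{k-1}^2>4r_k$, i.e.\ $\gamma_k<1/4$, enters), and iterates this down to level $m$; the Chebyshev nodes are thereby split into groups on which $P_{2^m}$ takes pairs of values summing to $-r_m$ by Vieta ($\alpha_m^{2i-1}+\alpha_m^{2i}=r_m$), and the case $m=0$ is handled separately by a symmetry argument. You instead work bottom-up and over $\mathbb{C}$: writing $P_{2^s}=\Phi\circ P_{2^m}$ with $\Phi=\phi_{s-1}\circ\cdots\circ\phi_m$, $\phi_k(y)=y^2+r_ky$, you reduce the claim to the root sum of $\Phi+r_s/2$, which you read off from the subleading coefficient by tracking it through the composition (it doubles at each step, since $r_k\Phi$ has too low degree to interfere once $\deg\Phi\ge 2$). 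Your approach buys two things: it needs no discriminant estimates and no reality of the intermediate constants (your $w_j$ may a priori be complex, which is harmless for the coefficient bookkeeping), and it treats $m=0$ uniformly with no separate symmetry argument. What the paper's heavier version buys in return is structural information that is reused later: the explicit grouping of the nodes, on which $P_{2^{m+1}}$ (and hence every higher $P_{2^{m+1+i}}$) is constant, is exactly what drives the proof of Lemma \ref{lem2}(a), whereas your argument proves the present lemma only. Finally, your regrouping step is sound as written because the $x_k$ exhaust the $2^s$ simple zeros of $P_{2^s}+r_s/2$, so the multiset of zeros of the factors $P_{2^m}-w_j$ is precisely $\{x_1,\dots,x_{2^s}\}$, each counted once.
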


\begin{proof} For $m=0$ we have the result by symmetry. Suppose $m\geq 1.$ By \eqref{poly1}, at the points
$(x_k)_{k=1}^{2^s}$ we have
$$P_{2^s}+\frac{r_s}{2}=(P_{2^{s-1}})^2+r_{s-1}P_{2^{s-1}}+\frac{r_s}{2}=0.$$
The discriminant of the equation is positive. Therefore, the roots satisfy $$(P_{2^{s-1}}+\alpha_{s-1}^1)(P_{2^{s-1}}+\alpha_{s-1}^2)=0,$$ where $\alpha_{s-1}^1+\alpha_{s-1}^2=r_{s-1}$ and $0<\alpha_{s-1}^1,\alpha_{s-1}^2<r_{s-1}$. Thus, a half of the points satisfy $P_{2^{s-1}}+\alpha_{s-1}^1=0$ while the other half satisfy $P_{2^{s-1}}+\alpha_{s-1}^2=0.$

Rewriting the equation $P_{2^{s-1}}+\alpha_{s-1}^1=0$, we see that $$P_{2^{s-2}}^2+r_{s-2}P_{2^{s-2}}+\alpha_{s-1}^1=0.$$  Since $r_{s-2}^2>4 r_{s-1}>4\alpha_{s-1}^1$, this yields $$(P_{2^{s-2}}+\alpha_{s-2}^1)(P_{2^{s-2}}+\alpha_{s-2}^2)=0$$ with $\alpha_{s-2}^1+\alpha_{s-2}^2=r_{s-2}$ and $0<\alpha_{s-2}^1,\alpha_{s-2}^2<r_{s-2}.$ By the same argument, the second half of the roots
satisfy $$(P_{2^{s-2}}+\alpha_{s-2}^3)(P_{2^{s-2}}+\alpha_{s-2}^4)=0$$  with $\alpha_{s-2}^3+\alpha_{s-2}^4=r_{s-2}$ and $0<\alpha_{s-2}^3,\alpha_{s-2}^4<r_{s-2}.$

Since at each step $r_{i-1}^2>4 r_{i}$ we can continue this procedure until obtaining $P_{2^{m+1}}.$ So we can
decompose the Chebyshev nodes $(x_k)_{k=1}^{2^s}$ into $2^{s-m-1}$ groups.
All $2^{m+1}$ nodes from the $i-$th group satisfy $$P_{2^{m+1}}+\alpha_{m+1}^{i}=0,\,\,\, 0<\alpha_{m+1}^{i}<r_{m+1}.$$
By using these $2^{s-m-1}$ equations we finally obtain $$(P_{2^{m}}+\alpha_m^{2i-1})(P_{2^{m}}+\alpha_m^{2i})=0$$ where $\alpha_m^{2i-1}+\alpha_m^{2i}=r_m.$ Thus, given fixed $i-$th group, for $2^m$ points from the group we have
$P_{2^{m}}=-\alpha_m^{2i-1},$ whereas for another half, $P_{2^{m}}=-\alpha_m^{2i}.$ 
Consequently, we have $$\displaystyle\int\left(P_{2^m}+\frac{r_m}{2}\right)d\nu_s=\displaystyle\int P_{2^m} d\nu_s+\frac{r_m}{2}=\frac{\sum_{i=1}^{2^{s-m-1}} 2^m(-\alpha_m^{2i-1}-\alpha_m^{2i})}{2^s}+\frac{r_m}{2}=0.$$ 
\end{proof}

\begin{lemma}\label{lem2}Let $0\leq i_1<i_2<\ldots i_n<s.$  Then
\begin{enumerate}[label=(\alph*)]
\item \begin{eqnarray*}
\int P_{2^{i_1}}P_{2^{i_2}}\ldots P_{2^{i_n}}d\nu_s&=&
\int P_{2^{i_1}}d\nu_s \int P_{2^{i_2}}d\nu_s\ldots \int P_{2^{i_n}}d\nu_s\\
&=&
(-1)^n\,\prod_{k=1}^n \frac{r_{i_k}}{2}.
\end{eqnarray*}
\item\label{part2}$\displaystyle\int\left(P_{2^{i_1}}+\frac{r_{i_1}}{2}\right)\left(P_{2^{i_2}}+\frac{r_{i_2}}{2}\right)\ldots \left(P_{2^{i_n}}+\frac{r_{i_n}}{2}\right)d\nu_s=0.$
\end{enumerate}
\end{lemma}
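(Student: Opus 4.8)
The plan is to introduce the shorthand $f_m := P_{2^m} + r_m/2$ and to prove part (b) first, then deduce part (a) from it. Observe that Lemma \ref{lem1} says precisely $\int f_m \, d\nu_s = 0$ for every $m<s$, whence $\int P_{2^m}\, d\nu_s = -r_m/2$. This already yields the rightmost equality in (a), namely $\prod_{k=1}^n \int P_{2^{i_k}}\,d\nu_s = (-1)^n\prod_{k=1}^n r_{i_k}/2$, so the real content is the first equality in (a) together with (b).

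To prove (b) I would reuse the recursive decomposition of the nodes $(x_k)_{k=1}^{2^s}$ constructed in the proof of Lemma \ref{lem1}, applied at the \emph{lowest} index $m:=i_1$. That decomposition partitions the $2^s$ nodes into groups on each of which $P_{2^{m+1}}$ is constant (equal to some $-\alpha_{m+1}^i$), and it further splits each group into two halves of equal cardinality $2^m$ on which $P_{2^m}$ takes the values $-\alpha_m^{2i-1}$ and $-\alpha_m^{2i}$ with $\alpha_m^{2i-1}+\alpha_m^{2i}=r_m$; consequently $f_m=P_{2^m}+r_m/2$ takes two opposite values on the two halves. The extra observation I need is that, by the recursion \eqref{poly1}, each $P_{2^j}$ with $j\ge m+1$ is a polynomial in $P_{2^{m+1}}$, so fixing $P_{2^{m+1}}$ freezes all higher $P_{2^j}$; hence the factors $f_{i_2},\dots,f_{i_n}$, whose indices all exceed $m$, are \emph{constant} on each group. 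Thus on a given group the product $f_{i_1}f_{i_2}\cdots f_{i_n}$ equals that constant times $f_{i_1}$, and summing $f_{i_1}$ over the two opposite halves of equal size gives $0$. Summing over all groups yields $\int f_{i_1}\cdots f_{i_n}\,d\nu_s = 0$, which is (b); the very same argument applied to a single factor recovers Lemma \ref{lem1}.

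Part (a) then follows by expanding $\prod_{k=1}^n P_{2^{i_k}} = \prod_{k=1}^n\bigl(f_{i_k}-r_{i_k}/2\bigr)$ into $2^n$ terms indexed by subsets $S\subseteq\{1,\dots,n\}$: every term with $S\neq\emptyset$ is, up to a constant, a product of distinct $f_{i_k}$ with indices below $s$, so its integral vanishes by (b), leaving only the term $\prod_{k}(-r_{i_k}/2)=(-1)^n\prod_k r_{i_k}/2$, which matches $\prod_k\int P_{2^{i_k}}\,d\nu_s$. I expect the main obstacle to be the careful bookkeeping of the group decomposition inherited from Lemma \ref{lem1}: one must verify that fixing $P_{2^{m+1}}$ genuinely freezes all higher $P_{2^j}$, that the two halves of each group carry equal $\nu_s$-mass, and that $f_m$ really takes opposite values on them. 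Once these structural facts are secured, both the vanishing in (b) and the telescoping expansion in (a) are routine.
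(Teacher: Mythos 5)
Your proof is correct, and it rests on the same structural facts as the paper's: the recursive decomposition of the Chebyshev nodes into groups inherited from Lemma \ref{lem1}, the constancy of all higher-index polynomials $P_{2^j}$ on each group (each being a polynomial in $P_{2^{m+1}}$, which is frozen at the constant $-\alpha\beta$ there), and the equal cardinality $2^m$ of the two halves of each group. Where you differ is in the logical order. The paper proves (a) first, by iteratively peeling off the lowest-index factor: on each group the remaining factors are constant, so each peel contributes a factor $-r_{i_k}/2$, and iterating together with Lemma \ref{lem1} finishes (a); part (b) is then deduced from (a) and Lemma \ref{lem1} by expanding the product, which collapses to $\prod_{k=1}^n (r_{i_k}/2)\cdot\sum_{k=0}^n\binom{n}{k}(-1)^{n-k}=0$. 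You reverse this: you prove (b) directly in a single step, because $f_{i_1}=P_{2^{i_1}}+r_{i_1}/2$ takes the opposite values $\pm(\beta-\alpha)/2$ on the two equal halves of each group while the higher-index factors are constant there, so each group contributes zero; then (a) falls out of expanding $\prod_{k}\bigl(f_{i_k}-r_{i_k}/2\bigr)$, since every term containing at least one $f$ vanishes by (b) applied to the corresponding sub-product. Your route avoids both the iteration and the binomial identity, so it is arguably cleaner; the paper's route obtains (a) without ever introducing the centered polynomials $f_m$. One small point you should make explicit: when $i_1=0$ your argument needs the group decomposition to extend one level further, down to $m=0$ (it does, since $r_0^2=1>4\gamma_1=4r_1$ keeps the relevant discriminants positive), or else you should dispatch $i_1=0$ by the symmetry/antisymmetry about $x=1/2$ as the paper does, because the proof of Lemma \ref{lem1} as written constructs the decomposition only for $m\geq 1$.
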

\begin{proof}
\begin{enumerate}[label=(\alph*)]
\item Suppose that $i_1\geq 1.$ As above, we can decompose the nodes $(x_k)_{k=1}^{2^s}$
into $2^{s-i_1-1}$ equal groups such that the nodes from the $j-$th group satisfy an equation
$$(P_{2^{i_1}}+\alpha_{i_1}^{2j-1})(P_{2^{i_1}}+\alpha_{i_1}^{2j})=0$$ with $\alpha_{i_1}^{2j-1}+\alpha_{i_1}^{2j}=r_{i_1}.$ If, on some set,
$(P_{2^k}+\alpha)(P_{2^k}+\beta)=0$ with $\alpha+\beta=r_k,$ then
$P_{2^{k+1}}=P_{2^k}^2+  P_{2^k}\,r_k=-\alpha \beta$ and each of the next polynomials $P_{2^{k+i}}$
is constant on this set. Therefore the function $P_{2^{i_2}}\ldots P_{2^{i_n}}$ takes the same
value for all $x_k$ from the  $j-$th group. This allows to apply the argument of Lemma \ref{lem1}:

$$\displaystyle\int P_{2^{i_1}}P_{2^{i_2}}\ldots P_{2^{i_n}}d\nu_s=-\frac{r_{i_1}}{2}\displaystyle\int P_{2^{i_2}}P_{2^{i_3}}\ldots P_{2^{i_{n}}}d\nu_s.$$
This equality is valid also for $i_1=0$ since $$\displaystyle \int\left(P_{1}+\frac{1}{2}\right)P_{2^{i_2}}\ldots P_{2^{i_n}}d\nu_s=0,$$
by symmetry. Proceeding this way, the result follows, since $ -r_m/2= \int P_{2^m}d\nu_s,$ by Lemma \ref{lem1}.\\

\item Opening the parentheses yields
$$\int P_{2^{i_1}}P_{2^{i_2}}\ldots P_{2^{i_n}}d\nu_s + \sum_{k=1}^n \frac{r_{i_k}}{2} \int \prod_{j\ne k}
 P_{2^{i_j}}d\nu_s + \cdots + \prod_{k=1}^n \frac{r_{i_k}}{2}.$$ By Lemma \ref{lem1} and part $(a)$, this is
 $$  \prod_{k=1}^n \frac{r_{i_k}}{2} \cdot \sum_{k=0}^n \binom{n}{k}\,(-1)^{n-k}=0.$$ 
 \end{enumerate}
\end{proof}

\begin{remark} We can use $\mu_{K(\gamma)}$ instead of $\nu_s$ in Lemma \ref{lem1} and Lemma \ref{lem2}, since,
 by Theorem \ref{totcheb}, $\nu_s {\to} \mu_{K(\gamma)}$ in the weak-star topology.
\end{remark}

\begin{theorem}\label{main1} The monic orthogonal polynomial $Q_{2^s}$ with respect to
the equilibrium measure $\mu_{K(\gamma)}$ coincides with the corresponding
Chebyshev polynomial $P_{2^s}+ r_s/2$ for all  $s\in \mathbb{N}_0$.
\end{theorem}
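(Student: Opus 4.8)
The plan is to use the standard characterization of the monic orthogonal polynomial: since $\mu_{K(\gamma)}$ has infinite support, $Q_{2^s}$ is the \emph{unique} monic polynomial of degree $2^s$ that is orthogonal, with respect to $\mu_{K(\gamma)}$, to every polynomial of degree strictly less than $2^s$. Because $P_{2^s}$ is monic of degree $2^s$, the candidate $P_{2^s}+r_s/2$ is monic of degree $2^s$ as well, so it suffices to verify that $P_{2^s}+r_s/2$ is orthogonal to each element of some basis of the space $\mathcal P_{2^s-1}$ of polynomials of degree at most $2^s-1$.

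The basis I would use is the family of products $\prod_{k\in A}\left(P_{2^k}+\frac{r_k}{2}\right)$ indexed by subsets $A\subseteq\{0,1,\dots,s-1\}$, with the empty product equal to $1$. Each such product is monic of degree $\sum_{k\in A}2^k$, and as $A$ ranges over all subsets these degrees run through $0,1,\dots,2^s-1$ exactly once, by uniqueness of binary expansions. Hence these $2^s$ products have pairwise distinct degrees, are therefore linearly independent, and span $\mathcal P_{2^s-1}$.

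It then remains to show, for every $A\subseteq\{0,\dots,s-1\}$, that $\int\left(P_{2^s}+\frac{r_s}{2}\right)\prod_{k\in A}\left(P_{2^k}+\frac{r_k}{2}\right)d\mu_{K(\gamma)}=0$. Writing $B=A\cup\{s\}$, the integrand is precisely $\prod_{j\in B}\left(P_{2^j}+\frac{r_j}{2}\right)$, a product over a set of distinct indices all at most $s$ (the index $s$ genuinely appears since $s\notin A$). For any $s'>s$ every index in $B$ is strictly smaller than $s'$, so Lemma \ref{lem2}\ref{part2} yields $\int \prod_{j\in B}\left(P_{2^j}+\frac{r_j}{2}\right)d\nu_{s'}=0$; the case $A=\varnothing$ is the one-factor instance, equivalently Lemma \ref{lem1}. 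Letting $s'\to\infty$ and invoking the weak-star convergence $\nu_{s'}\to\mu_{K(\gamma)}$ from Theorem \ref{totcheb} (as recorded in the remark following Lemma \ref{lem2}) transfers each vanishing integral from $\nu_{s'}$ to $\mu_{K(\gamma)}$. This establishes the required orthogonality and hence the identification $Q_{2^s}=P_{2^s}+r_s/2$.

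The conceptual core, and the step I would be most careful about, is the basis claim together with the bookkeeping that makes each orthogonality relation land exactly in the form covered by Lemma \ref{lem2}\ref{part2}: once the polynomials $P_{2^k}+r_k/2$ are recognized as the correct building blocks, the binary-expansion count and the index condition $i_n<s$ mesh with no slack. The only analytic point, the passage to the limit, is harmless, since each integrand is a fixed polynomial and weak-star convergence against polynomials is immediate from Theorem \ref{totcheb}.
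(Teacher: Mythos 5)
Your proposal is correct and follows essentially the same route as the paper: the paper likewise expands polynomials of degree less than $2^s$ in the basis of products $\prod_{k\in A}\bigl(P_{2^k}+\tfrac{r_k}{2}\bigr)$, $A\subseteq\{0,\dots,s-1\}$, and invokes Lemma \ref{lem2}\ref{part2} together with the weak-star convergence $\nu_{s'}\to\mu_{K(\gamma)}$ to obtain orthogonality. The only cosmetic difference is that the paper dispatches the case $s=0$ separately by symmetry, while you absorb it into the general argument via the one-factor instance of the lemma.
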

\begin{proof}
 For $s=0$ we have the result by symmetry. Let $s\geq 1.$ Each polynomial $P$ of degree less than $2^s$
 is a linear combination of polynomials of the type
$$\left(P_{2^{s-1}}+\frac{r_{s-1}}{2}\right)^{n_{s-1}}\ldots \left(P_{2}+\frac{r_1}{2}\right)^{n_1}
\left( x-\frac{1}{2} \right)^{n_0}$$
with  $n_i\in\{0,1\}.$ By Lemma \ref{lem2}, $P_{2^{s}}+{r_{s}}\mathbin{/}{2}$ is orthogonal to all
polynomials of degree less than $2^s,$ so it is $Q_{2^s}$.
\end{proof}

By \eqref{poly1}, we immediately have
\begin{Corollary}\label{cor1}
 $Q_{2^{s+1}}= Q_{2^s}^2- (1-2\,\gamma_{s+1})\,r_s^2/4$ for  $s\in \mathbb{N}_0$.
 \end{Corollary}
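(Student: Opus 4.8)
The plan is to reduce everything to the explicit description of the orthogonal polynomials furnished by Theorem~\ref{main1}, after which the identity is a one-line algebraic verification. By that theorem we may write $Q_{2^s} = P_{2^s} + r_s/2$ and $Q_{2^{s+1}} = P_{2^{s+1}} + r_{s+1}/2$, so it suffices to relate $P_{2^{s+1}}$ to $Q_{2^s}^2$ and then to absorb the resulting constants using the recursion defining the radii $r_s$.

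First I would expand the square, $Q_{2^s}^2 = (P_{2^s} + r_s/2)^2 = P_{2^s}^2 + r_s\,P_{2^s} + r_s^2/4$. The recurrence~\eqref{poly1} gives $P_{2^{s+1}} = P_{2^s}(P_{2^s} + r_s) = P_{2^s}^2 + r_s\,P_{2^s}$, whence $Q_{2^s}^2 = P_{2^{s+1}} + r_s^2/4$, i.e.\ $P_{2^{s+1}} = Q_{2^s}^2 - r_s^2/4$.

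It then remains to pass from $P_{2^{s+1}}$ back to $Q_{2^{s+1}}$ by adding $r_{s+1}/2$ and to collect the two constant terms. Using $r_{s+1} = \gamma_{s+1} r_{s}^2$ from the definition of the sequence, I would compute $-r_s^2/4 + r_{s+1}/2 = -r_s^2/4 + \gamma_{s+1} r_s^2/2 = -(1 - 2\gamma_{s+1})\,r_s^2/4$, which yields $Q_{2^{s+1}} = Q_{2^s}^2 - (1 - 2\gamma_{s+1})\,r_s^2/4$ exactly. There is essentially no obstacle here: the entire content of the corollary lies in Theorem~\ref{main1}'s identification of $Q_{2^s}$ with $P_{2^s} + r_s/2$, and once that is in hand the statement is purely formal, which is precisely why it is flagged as immediate.
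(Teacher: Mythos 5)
Your proof is correct and is exactly the computation the paper leaves implicit: it invokes Theorem~\ref{main1} to write $Q_{2^s}=P_{2^s}+r_s/2$ and $Q_{2^{s+1}}=P_{2^{s+1}}+r_{s+1}/2$, then uses the recursion \eqref{poly1} together with $r_{s+1}=\gamma_{s+1}r_s^2$ to collect the constants. Nothing is missing; this is precisely why the paper states the corollary as immediate.
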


\section{Some products of orthogonal polynomials}
So far we only obtain $2^s$ degree orthogonal polynomials. We try to find $Q_n$
for other degrees.  By Corollary \ref{cor1}, since  $\int Q_{2^{s+1}}d\mu_{K(\gamma)}=0,$ we have
 \begin{equation} \label{norm1}
||Q_{2^s}||^2= \int Q_{2^s}^2d\mu_{K(\gamma)}=(1-2\,\gamma_{s+1})\,r_s^2/4
 \end{equation}

  and
\begin{equation} \label{norm2}
Q_{2^{s+1}}=Q_{2^s}^2-||Q_{2^s}||^2,\,\,\forall s\in \mathbb{N}_0.
\end{equation}

Our next goal is to evaluate $\int A\,d\mu_{K(\gamma)}$ for {\it A-polynomial} of the form
\begin{equation} \label{A}
A=(Q_{2^{s_n}})^{i_n}(Q_{2^{s_{n-1}}})^{i_{n-1}}\ldots (Q_{2^{s_{1}}})^{i_{1}},
\end{equation}
where $s_n>s_{n-1}>\ldots>s_1>0$ and $i_1,i_2,\ldots,i_n\in\{1,2\}$.\\

The next lemma is basically a consequence of \eqref{norm2}.

\begin{lemma}\label{int1}Let $A$ be a polynomial satisfying \eqref{A}. Then the following propositions hold:
\begin{enumerate}[label=(\alph*)]
\item If $i_n=2$ then $\displaystyle\int A\, d\mu_{K(\gamma)}=
\|Q_{2^{s_n}}\|^{2}\int Q_{2^{s_{n-1}}}^{\,i_{n-1}}\cdots Q_{2^{s_{1}}}^{\,i_{1}}d\mu_{K(\gamma)}.$\\
\item If $i_n=i_{n-1}=\ldots=i_{n-k}=1$ and $i_{n-k-1}=2$ where $k\in \{0,1,\ldots,n-2\},$ then
$$\displaystyle \int A\, d\mu_{K(\gamma)}= \|Q_{2^{s_n}}\|^{2} \int Q_{2^{s_{n-k-2}}}^{\,i_{n-k-2}} \cdots Q_{2^{s_{1}}}^{\,i_{1}} \, d\mu_{K(\gamma)},$$ provided $s_n=s_{n-1}+1=s_{n-2}+2=\cdots=s_{n-k-1}+k+1.$\\
\item If $i_k=1$ and $s_k\geq s_{k-1}+2$ for some $k\in \{2,3,\ldots,n\},$ then  $\displaystyle\int A\, d\mu_{K(\gamma)}=0.$\\
\item  If $i_1=i_2=\cdots =i_k=1$ for some $k\in \{1,2,\cdots,n\},$ then $\displaystyle\int A\, d\mu_{K(\gamma)}=0.$
\end{enumerate}
\end{lemma}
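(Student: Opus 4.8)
The plan is to work throughout with the abbreviation $q_s := Q_{2^s}$ and to exploit two facts. First, by \eqref{norm2} we have the reduction rule $q_s^2 = q_{s+1} + \|q_s\|^2$, so every square of a factor can be traded for a single factor of the next index plus a constant; crucially, this rule only ever raises indices. Second, by part (b) of Lemma~\ref{lem2} together with the remark that lets us replace $\nu_s$ by $\mu_{K(\gamma)}$, and with Theorem~\ref{main1} identifying $q_v = Q_{2^v} = P_{2^v}+r_v/2$, we have $\int \prod_{v\in V} q_v \, d\mu_{K(\gamma)} = 0$ for every finite nonempty $V \subset \mathbb{N}_0$. The products $\prod_{v\in V} q_v$ are monic of degree $\sum_{v\in V} 2^v$, and since binary expansions are unique these degrees run through $\mathbb{N}_0$ exactly once; hence these squarefree products form a graded basis of $\mathbb{R}[x]$, and for any polynomial $P$ the integral $\int P \, d\mu_{K(\gamma)}$ equals the coefficient of the empty product in this basis. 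Each part of the lemma is then a statement about that coefficient.

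For part (a) I would substitute $q_{s_n}^2 = q_{s_n+1} + \|q_{s_n}\|^2$, so that $A = q_{s_n+1} B + \|q_{s_n}\|^2 B$ with $B = \prod_{j<n} q_{s_j}^{i_j}$. Since $s_{n-1} < s_n$ gives $\deg B \le 2(2^{s_n-1}+\cdots+1) < 2^{s_n+1}$, orthogonality of $q_{s_n+1}=Q_{2^{s_n+1}}$ kills the first term and leaves exactly $\|q_{s_n}\|^2 \int B\, d\mu_{K(\gamma)}$. Part (b) I would prove by induction on $k$, writing $m=s_n$ so that the consecutive hypothesis makes the top block equal to $q_m q_{m-1}\cdots q_{m-k}\, q_{m-k-1}^2$. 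In the base case $k=0$ one expands $q_m q_{m-1}^2 = q_{m+1} + \|q_m\|^2 + \|q_{m-1}\|^2 q_m$ and discards the $q_{m+1}$- and $q_m$-terms by orthogonality against $R$ (whose degree is $<2^{m-1}$). For the inductive step, expanding the bottom square $q_{m-k-1}^2 = q_{m-k} + \|q_{m-k-1}\|^2$ splits the block into the $(k-1)$-case, to which the induction hypothesis applies, plus a remainder $\|q_{m-k-1}\|^2\, q_m\,(q_{m-1}\cdots q_{m-k}R)$; a degree count gives $\deg(q_{m-1}\cdots q_{m-k}R) < 2^m$, so this remainder is annihilated by orthogonality of $q_m$. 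Tracking which $k+2$ factors are consumed and checking the degree bounds is the routine but fiddly part here.

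Parts (c) and (d) I would treat with the basis directly, independently of (a) and (b). For (c), factor $A = H\, q_{s_k}\, L$ with $H = \prod_{j>k} q_{s_j}^{i_j}$ and $L = \prod_{j<k} q_{s_j}^{i_j}$. Because $H$ is a polynomial in the $q_t$ with $t \ge s_{k+1} > s_k$ and the reduction rule only raises indices, the squarefree expansion of $H$ uses only products with all indices $> s_k$. Because $s_{k-1} \le s_k - 2$ forces $\deg L \le 2^{s_k}-2 < 2^{s_k}$, every squarefree product in the expansion of $L$ has maximal index $\le s_k - 1 < s_k$. Multiplying out, $A$ becomes a linear combination of products $\prod_{v\in V} q_v$ in which the index $s_k$ always occurs, the upper indices being $>s_k$ and the lower ones $<s_k$; so each $V$ is nonempty and every term integrates to $0$ by part (b) of Lemma~\ref{lem2}, whence $\int A\, d\mu_{K(\gamma)} = 0$. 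Part (d) is the same argument with $L$ replaced by the already squarefree block $q_{s_k} q_{s_{k-1}}\cdots q_{s_1}$: reducing only $H$ (indices $>s_k$) and appending these $k$ distinct lower factors again yields only nonempty squarefree products, each with zero integral.

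The main obstacle is the index-separation claim used in (c) and (d): that reducing the upper block $H$ never produces a factor of index $\le s_k$, while the lower block $L$ never produces one of index $\ge s_k$. The second half is immediate from the degree bound, but the first needs the observation that the algebra generated by $\{q_t : t \ge s_{k+1}\}$ is spanned by squarefree products with indices in that range, which follows because that span is closed under multiplication via the reduction rule. Once this separation is in hand, every part collapses to the single fact that a nonempty squarefree product integrates to zero. The telescoping bookkeeping in (b) is the other place demanding care.
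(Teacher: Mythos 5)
Your proposal is correct, but for parts (c) and (d) it takes a genuinely different route from the paper. For (a) and (b) you and the paper do essentially the same thing: apply the reduction \eqref{norm2} and kill the resulting terms by orthogonality (the paper runs the telescoping in (b) iteratively, collecting the discarded terms into a linear combination ${\mathcal L}$; you package the same mechanism as an induction on $k$). For (c) and (d), however, the paper first strips the top of $A$ by repeated applications of (a) and (b), reducing to $C\int Q_{2^{s_m}}\cdots Q_{2^{s_k}}\,R\,d\mu_{K(\gamma)}$ with $C>0$, and then concludes by a degree comparison: the leading factor is orthogonal to the rest, whose degree is less than $2^{s_m}$. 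You instead bypass (a) and (b) entirely: you expand the upper and lower blocks in the squarefree basis $\{\prod_{v\in V}Q_{2^v}\}$ --- which is exactly the $B$-polynomial basis the paper introduces only after this lemma --- establish the index-separation claim, and reduce everything to the single fact that $\int\prod_{v\in V}Q_{2^v}\,d\mu_{K(\gamma)}=0$ for every finite nonempty $V$, obtained from Lemma \ref{lem2}(b) via Theorem \ref{main1} and the weak-star remark. This is non-circular (all three ingredients precede Lemma \ref{int1} in the paper) and arguably cleaner: it avoids the paper's rather terse ``repeated application of (a) and (b)'' bookkeeping, and it yields as a byproduct the mutual orthogonality of the $B$-polynomials, which the paper later deduces from part (d) of this very lemma. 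What the paper's route buys in exchange is self-containment: it stays entirely inside the recurrence \eqref{norm2} and plain orthogonality, never invoking the Chebyshev identification of Theorem \ref{main1} or the limit $\nu_s\to\mu_{K(\gamma)}$.

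One small inaccuracy: in your base case of (b) you assert $\deg R<2^{m-1}$; this can fail (e.g.\ $R=Q_{2^{m-2}}^2$ has degree exactly $2^{m-1}$), since the hypotheses only force $s_{n-2}\le m-2$ and hence $\deg R<2^{s_{n-2}+2}\le 2^{m}$. The slip is harmless: orthogonality of $Q_{2^m}$ and $Q_{2^{m+1}}$ to $R$ needs only $\deg R<2^m$, which does hold, and your inductive step already uses the correct bound $\deg R<2^{m-k}$.
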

\begin{proof}
\begin{enumerate}[label=(\alph*)]
\item Using \eqref{norm2}, we have $ Q_{2^{s_n}}^{2}= Q_{2^{s_{n}+1}}+\|Q_{2^{s_n}}\|^{2}.$ The result easily follows since the degree of $Q_{2^{s_{n-1}}}^{\,i_{n-1}}\cdots Q_{2^{s_{1}}}^{\,i_{1}}$ is less than
 $2^{s_n+1}$.\\
\item Here $A= Q_{2^{s_n}}\,Q_{2^{s_n-1}} \cdots Q_{2^{s_n-k}}Q_{2^{s_n-k-1}}^2 \cdot P$ with $P=Q_{2^{s_{n-k-2}}}^{\,i_{n-k-2}} \cdots Q_{2^{s_{1}}}^{\,i_{1}}.$ Observe that the degrees of the first $k+2$
polynomials are consecutive and $\deg \,P <2^{s_{n-k-2}+2}\leq 2^{s_{n-k}}.$
We apply  \eqref{norm2} repeatedly. First, $\,Q_{2^{s_n-k-1}}^2 = Q_{2^{s_n-k}} +||Q_{2^{s_n-k-1}}||^2.$ Then $Q_{2^{s_n-k}}\, Q_{2^{s_n-k-1}}^2=Q_{2^{s_n-k+1}}+||Q_{2^{s_n-k}}||^2+ ||Q_{2^{s_n-k-1}}||^2\cdot Q_{2^{s_n-k}}.$ After $k+1$ steps we write $A$ in the form $ ( Q_{2^{s_{n}+1}}+\|Q_{2^{s_n}}\|^{2}+{\mathcal L})\,P,$ where ${\mathcal L}$ is a linear combination of the polynomials
$Q_{2^{s_n}},$ $\,Q_{2^{s_n}} Q_{2^{s_n-1}}, \cdots , Q_{2^{s_n}} Q_{2^{s_n-1}}\cdots Q_{2^{s_n-k}}.$
By orthogonality, all terms vanish after integration, except
$\|Q_{2^{s_n}}\|^{2}\,P,$ which is the desired conclusion.\\
\item Let us take the maximal $k$ with such property. Repeated application of $(a)$ and $(b)$ enables us to reduce $\int A\, d\mu_{K(\gamma)}$ to $C\,\int A_1\, d\mu_{K(\gamma)}$ with $C >0$ and
$A_1=Q_{2^{s_m}}\cdots Q_{2^{s_k}} \cdot R,$ where $R= Q_{2^{s_{k-1}}}^{i_{k-1}} \cdots Q_{2^{s_{1}}}^{i_{1}}$
with  $\deg \,R <2^{s_{k-1}+2}\leq 2^{s_{k}}.$
Comparing the degrees gives the result.\\

\item Similarly, $\int A\,d\mu_{K(\gamma)}= C\cdot \int Q_{2^{s_k}} \cdots Q_{2^{s_1}}  d\mu_{K(\gamma)}=0.$
\end{enumerate}
\end{proof}

\begin{theorem}\label{int2}
For $A-$polynomial given in \eqref{A} with $s_0:=-1, i_{n+1}:=2$, let $c_k=(i_k-1)^{s_k-s_{k-1}-1}$ and
$c=\prod_{k=1}^n c_k.$ Then $\int A\,d\mu_{K(\gamma)}= c\cdot \prod_{k=1}^n ||Q_{2^k}||^{2(i_{k+1}-1)}.$
\end{theorem}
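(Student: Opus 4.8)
The plan is to establish the closed formula by induction on $n$, the number of distinct factors of $A$, peeling factors off the top (highest level $s_n$) by repeated application of Lemma~\ref{int1}. The first observation is that, since $i_k-1\in\{0,1\}$ and $s_k-s_{k-1}-1\geq 0$ for every $k$, each $c_k$ lies in $\{0,1\}$ (with $0^0=1$), so $c\in\{0,1\}$; moreover $c_k=0$ precisely when $i_k=1$ and $s_k\geq s_{k-1}+2$, whereas $c_k=1$ when $i_k=2$, or when $i_k=1$ and $s_k=s_{k-1}+1$. This dichotomy is exactly what separates the vanishing and the non-vanishing branches of Lemma~\ref{int1}, which is why the conventions $s_0:=-1$ and $i_{n+1}:=2$ are the correct ones: the bottom factor behaves as if preceded by a level-$(-1)$ factor, and the top factor as if followed by a squared one. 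The base case $n=0$ (empty product) reads $\int 1\,d\mu_{K(\gamma)}=1=c\cdot 1$, which holds.

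For the inductive step I would first dispose of the case $c=0$, where both sides must vanish. If $c_1=0$ then $i_1=1$, and Lemma~\ref{int1}\,(d) with $k=1$ gives $\int A\,d\mu_{K(\gamma)}=0$. Otherwise $c_k=0$ for some $k\geq 2$, which forces $i_k=1$ together with $s_k\geq s_{k-1}+2$, so Lemma~\ref{int1}\,(c) gives $\int A\,d\mu_{K(\gamma)}=0$. In either situation the identity holds trivially.

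It remains to treat $c\neq 0$, where every $c_k=1$; in particular $c_1=1$ forces $i_1=2$. Here I split on $i_n$. If $i_n=2$, Lemma~\ref{int1}\,(a) yields $\int A\,d\mu_{K(\gamma)}=\|Q_{2^{s_n}}\|^{2}\int A'\,d\mu_{K(\gamma)}$, where $A'$ is $A$ with the top factor deleted, and I apply the inductive hypothesis to $A'$ under its own convention $i^{A'}_n:=2$. If instead $i_n=1$, then $c_n=1$ forces $s_n=s_{n-1}+1$; taking the maximal run $i_n=\cdots=i_{m+1}=1$ of exponents equal to $1$ at the top, the relation $i_1=2$ guarantees that this run terminates at some index $m\geq 1$ with $i_m=2$, while $c\neq 0$ forces the levels $s_n=s_{n-1}+1=\cdots=s_m+(n-m)$ to be consecutive. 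Hence Lemma~\ref{int1}\,(b) applies and again extracts a single factor $\|Q_{2^{s_n}}\|^{2}$ while reducing to the tail $A''=Q_{2^{s_{m-1}}}^{\,i_{m-1}}\cdots Q_{2^{s_1}}^{\,i_1}$, to which the inductive hypothesis is applied.

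In both sub-cases the crux is a bookkeeping check: that the extracted $\|Q_{2^{s_n}}\|^{2}$ is exactly the $k=n$ term $\|Q_{2^{s_n}}\|^{2(i_{n+1}-1)}$ of the product (using $i_{n+1}=2$); that each deleted factor contributes trivially to both sides (its $c_k$ equals $1$, and the exponent $i_{k+1}=1$ it carries into the product yields a unit norm factor); and, most delicately, that the virtual top exponent of the reduced polynomial coincides with the value of the exponent it replaces. This last point is automatic because one only ever deletes down to a level whose exponent equals $2$, matching the artificial value $i_{n+1}=2$ built into the convention; this alignment is the step I expect to demand the most care, since it is precisely where an off-by-one index shift could silently corrupt the formula. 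Once these pieces are matched, $\|Q_{2^{s_n}}\|^{2}\cdot(\text{inductive value})$ collapses to $c\prod_{k=1}^n\|Q_{2^{s_k}}\|^{2(i_{k+1}-1)}$, closing the induction.
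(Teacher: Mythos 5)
Your proof is correct and takes essentially the same approach as the paper's: an induction that peels the substrings $\{2\}$ and $\{1,\dots,1,2\}$ off the top of the string ${\mathcal I}$ via Lemma~\ref{int1}(a)--(d), each substring contributing $\|Q_{2^{s_k}}\|^{2}$ at its top level, with cases (c) and (d) accounting precisely for $c=0$. Like the paper's own proof, you in effect establish the formula with $\prod_{k=1}^{n}\|Q_{2^{s_k}}\|^{2(i_{k+1}-1)}$ on the right-hand side; the subscript $2^{k}$ in the theorem statement is an evident typo for $2^{s_k}$.
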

\begin{proof}
First we remark that $c\in\{0,1\}$ and $c=0$ if and only if $i_1=1$ or $i_k=1$ for some $k\in \{2,3,\cdots, n\}$
with $s_k>s_{k-1}+1,$ that is just in the cases $(c)$ and $(d)$ above.

Secondly, the procedures $(a)-(d)$ of Lemma \ref{int1} allow to find  $\int A\,d\mu_{K(\gamma)}$ for all admissible values of $(i_k)_{k=1}^n$ and $(s_k)_{k=1}^n.$ Consider the string ${\mathcal I}= \{i_n, i_{n-1}, \cdots, i_1\}.$ If $i_n=2$ then we use the procedure $(a)$. Suppose $i_n=1.$ If all other $i_k=1$ then  $\int A\,d\mu_{K(\gamma)}=0,$
by $(d)$. Otherwise we have a string $\{1, \cdots, 1, 2, \cdots\}.$ Here we check
the values $s_k$. If they are consecutive (including the first appearance of 2), then we use $(b)$
and get  the term $\|Q_{2^{s_n}}\|^{2(i_{n+1}-1)}$ in front of the new integral. Otherwise, by $(c)$,
the integral is zero.

  Thus, in the case of non zero integral, we decompose  ${\mathcal I}$
 into substrings of the types $\{2\}, \,\{1, 2\}, \cdots, \{1, \cdots, 1, 2\}$ without gaps between consecutive $s_k$ in the same substring. If there are two or more  consecutive substrings  $\{2\}$ then gaps between
 corresponding  $s_k$ are allowed. Let $k$ with $k<n$ be the index of the first term in
 $ {\mathcal I}_k=\{1, \cdots, 1, 2\}.$ Then $i_{k+1}=2$ and, by $(b)$, $ {\mathcal I}_k$ brings in the term $||Q_{2^k}||^2$ into the common product.
 \end{proof}

 \begin{Corollary}\label{cor2}
 For $A-$polynomial given in \eqref{A}, let $A=A_1\cdot Q_{2^{s_{1}}}^{i_{1}},$ so $A_1$ contains all terms
 of $A$ except the last. Suppose $i_1=i_2=2.$ Then
 $\int A\,d\mu_{K(\gamma)}= ||Q_{2^{s_{1}}}||^2 \int A_1\,d\mu_{K(\gamma)}.$
 \end{Corollary}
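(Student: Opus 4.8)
The plan is to read the claim directly off the closed-form evaluation of Theorem~\ref{int2}, applied once to $A$ and once to $A_1$, keeping careful track of which factors the bottom block $Q_{2^{s_1}}^{i_1}$ contributes. With the conventions $s_0:=-1$ and $i_{n+1}:=2$, Theorem~\ref{int2} reads
\[
\int A\,d\mu_{K(\gamma)}=\Big(\prod_{k=1}^{n}(i_k-1)^{\,s_k-s_{k-1}-1}\Big)\,\prod_{k=1}^{n}\big\|Q_{2^{s_k}}\big\|^{2(i_{k+1}-1)}.
\]
The block of index $s_1$ enters in two ways: through the combinatorial factor $c_1=(i_1-1)^{\,s_1-s_0-1}=(i_1-1)^{\,s_1+1}$, and through the norm factor $\|Q_{2^{s_1}}\|^{2(i_2-1)}$ carried by the exponent of the block immediately above it. The two hypotheses address exactly these two spots. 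Since $i_1=2$ we have $c_1=1^{\,s_1+1}=1$, so the presence of the bottom block does not force the whole integral to vanish. Since $i_2=2$ the norm factor is $\|Q_{2^{s_1}}\|^{2(2-1)}=\|Q_{2^{s_1}}\|^{2}$, which is precisely the quantity to be extracted.

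Next I would apply Theorem~\ref{int2} to $A_1=(Q_{2^{s_n}})^{i_n}\cdots(Q_{2^{s_2}})^{i_2}$, now regarded as an $A$-polynomial in its own right with lowest index $s_2$, the same reference convention $s_0:=-1$, and the same virtual top exponent $2$ (consistent with $i_{n+1}=2$ for $A$). Every block of index $s_3,\dots,s_n$ contributes the same combinatorial and norm factors as in $A$, because its exponent and its gap $s_k-s_{k-1}$ to the block below are untouched. The only block whose combinatorial contribution can change is the new bottom block of index $s_2$: in $A$ it contributed $(i_2-1)^{\,s_2-s_1-1}$, whereas in $A_1$ it contributes $(i_2-1)^{\,s_2-s_0-1}=(i_2-1)^{\,s_2}$. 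Because $i_2=2$, both equal $1$; combined with $c_1=1$ from the dropped block, the overall combinatorial constants of $A$ and $A_1$ coincide, and the corresponding norm factor $\|Q_{2^{s_2}}\|^{2(i_3-1)}$ is manifestly the same in both. Cancelling the common parts leaves $\int A\,d\mu_{K(\gamma)}=\|Q_{2^{s_1}}\|^{2}\int A_1\,d\mu_{K(\gamma)}$.

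The one genuinely delicate point is the bookkeeping at this new bottom block: dropping $s_1$ moves the reference value $s_0=-1$ from just below $s_1$ to just below $s_2$, so the relevant exponent in the combinatorial factor jumps from $s_2-s_1-1$ to $s_2+1$. Were $i_2$ equal to $1$, these would genuinely differ ($0^{\,s_2-s_1-1}$ equals $1$ only when $s_2=s_1+1$, while $0^{\,s_2}=0$ since $s_2\ge 2$), and the clean factorization would break down; the hypothesis $i_2=2$ collapses both to $1$ and dissolves the issue. I therefore expect this comparison of the two combinatorial constants to be the crux, with the remainder a routine matching of identical factors. As a cross-check one can argue directly from \eqref{norm2}, writing $Q_{2^{s_1}}^{2}=Q_{2^{s_1+1}}+\|Q_{2^{s_1}}\|^{2}$ and verifying $\int A_1\,Q_{2^{s_1+1}}\,d\mu_{K(\gamma)}=0$, but this essentially re-runs the reduction behind Theorem~\ref{int2} and becomes awkward when $s_2=s_1+1$.
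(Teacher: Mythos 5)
Your proof is correct and follows the paper's intended route: Corollary \ref{cor2} is stated as an immediate consequence of Theorem \ref{int2}, and your double application of that formula (to $A$ and to $A_1$), matching the combinatorial factors $c_k$ and the norm factors block by block, is exactly that argument — including your correct reading of the norm factors as $\|Q_{2^{s_k}}\|^{2(i_{k+1}-1)}$ rather than the paper's misprinted $\|Q_{2^{k}}\|^{2(i_{k+1}-1)}$. The only blemish is the harmless slip $c_1=(i_1-1)^{s_1-s_0-1}=(i_1-1)^{s_1}$, not $(i_1-1)^{s_1+1}$; since $i_1=2$, both equal $1$ and nothing in the argument is affected.
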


We will represent $Q_n$ in terms of {\it B-polynomials} that are defined, for $2^{m}\leq n <2^{m+1}$ with $m\in\mathbb{N}_0,$ as
$$B_n= (Q_{2^{m}})^{i_{m}}(Q_{2^{m-1}})^{i_{m-1}}\ldots(Q_1)^{i_{1}},$$
where $i_k\in \{0,1\}$ is the $k-$th coefficient in the binary representation $n= i_m\,2^m+ \cdots +i_0.$

Thus, $B_n$ is a monic polynomial of degree $n.$  The polynomials $B_{(2k+1)\cdot2^s}$ and
$B_{(2j+1)\cdot2^m}$ are orthogonal for all $j,k,m,s \in \mathbb{N}_0$ with $s\ne m.$ Indeed,
if $\min\{m,s\}=0$ then $\int B_{(2k+1)\cdot2^s}\,B_{(2j+1)\cdot2^m}\,d\mu_{K(\gamma)} =0,$
since one polynomial is symmetric about $x=1/2,$ whereas another is antisymmetric.
Otherwise we use Lemma \ref{int1} $(d)$. By $(a)$, we have
$$||B_n||^2= \prod_{k=0}^m ||Q_{2^{k}}||^{2i_k}=\prod_{k=0, i_k\ne 0}^m ||Q_{2^{k}}||^2.$$

\begin{theorem}\label{Bpol} For each $n\in \mathbb{N},$ let  $n=2^s(2k+1),$ the polynomial $Q_n$ has a unique representation as a linear combination of $B_{2^s}, B_{3\cdot2^s}\ldots,B_{(2k-1)\cdot2^s}, B_{(2k+1)\cdot2^s}.$
\end{theorem}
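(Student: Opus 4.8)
The plan is to expand $Q_n$ in the basis $\{B_m\}_{m=0}^{n}$ and to show that all coefficients outside the asserted list vanish, exploiting that the Gram matrix of the $B$-polynomials is block diagonal. Since each $B_m$ is monic of degree $m$, the family $\{B_m\}_{m=0}^{n}$ is a basis of the polynomials of degree at most $n$, so I may write $Q_n=\sum_{m=0}^{n}c_m B_m$; comparing leading coefficients gives $c_n=1$, and note that $n=(2k+1)2^s$ is itself of the required form. Writing $\langle f,g\rangle=\int fg\,d\mu_{K(\gamma)}$ and letting $v_2(m)$ denote the exponent of $2$ in $m$, the goal becomes to prove that $c_m=0$ whenever $v_2(m)\neq s$.

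The key structural input is that $\langle B_m,B_M\rangle=0$ as soon as $v_2(m)\neq v_2(M)$. For $m,M\geq 1$ this is exactly the orthogonality relation established immediately before the statement (apply it with $m=(2k'+1)2^{v_2(m)}$ and $M=(2j'+1)2^{v_2(M)}$); for $m=0$ it follows from $B_0=1$ together with $\int B_M\,d\mu_{K(\gamma)}=0$ for every $M\geq 1$, which is Lemma \ref{int1}(d). Consequently, grouping the indices $0,1,\dots,n$ according to the value of $v_2$, the Gram matrix $\bigl(\langle B_m,B_M\rangle\bigr)_{0\le m,M\le n}$ decomposes into diagonal blocks, one for each value of $v_2$.

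Now I feed in orthogonality. As $Q_n$ is orthogonal to every polynomial of degree below $n$ and $\deg B_M=M$, we have $\langle Q_n,B_M\rangle=0$ for all $M<n$. Fix a value $v\neq s$ and set $S_v=\{m:0\le m\le n,\ v_2(m)=v\}$. Since $v_2(n)=s$, every index in $S_v$ is strictly less than $n$, so the relation $\langle Q_n,B_M\rangle=0$ holds for each $M\in S_v$; expanding $Q_n=\sum_m c_mB_m$ and discarding, by block diagonality, all terms with $v_2(m)\neq v$, this yields the homogeneous system $\sum_{m\in S_v}c_m\langle B_m,B_M\rangle=0$ for $M\in S_v$. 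The matrix of this system is the Gram matrix of the distinct-degree monic polynomials $\{B_m:m\in S_v\}$, which are linearly independent and hence have a positive definite Gram matrix; therefore $c_m=0$ for all $m\in S_v$. Ranging over all $v\neq s$ annihilates every coefficient except those with $v_2(m)=s$, i.e. $m\in\{2^s,3\cdot2^s,\dots,(2k+1)2^s\}$, so $Q_n$ is a linear combination of $B_{2^s},B_{3\cdot2^s},\dots,B_{(2k+1)2^s}$ as claimed. Uniqueness is immediate, since these polynomials have distinct degrees and are thus linearly independent.

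I expect the only delicate point to be the block bookkeeping: verifying that for each $v\neq s$ the entire block $S_v$ lies below $n$ (so that $Q_n\perp B_M$ applies to the whole block), and correctly isolating $B_0=1$ into its own block via Lemma \ref{int1}(d). Once the block-diagonal shape of the Gram matrix is recorded, the remainder is routine linear algebra and poses no real obstacle.
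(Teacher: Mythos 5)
Your proof is correct and rests on exactly the same two pillars as the paper's: the orthogonality of $B$-polynomials with different $2$-adic valuations (the remark preceding the theorem) and the positivity of the Gram determinant of linearly independent functions. The only difference is direction — the paper constructs the candidate $P=a_0B_{2^s}+\dots+B_{(2k+1)\cdot 2^s}$ by solving the $k\times k$ system making it orthogonal to $B_{(2j+1)2^s}$, $j<k$, and then verifies $P=Q_n$, whereas you expand $Q_n$ in the full basis $\{B_m\}_{m=0}^n$ and prune coefficients block by block; this is the same argument run backwards, not a genuinely different route.

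One citation needs repair. For the $m=0$ block you claim that $\int B_M\,d\mu_{K(\gamma)}=0$ for every $M\geq 1$ is Lemma \ref{int1}(d). When $M$ is odd this is not an instance of that lemma: $B_M$ then contains the factor $Q_1$, while the $A$-polynomials of \eqref{A} require $s_1>0$, so $Q_1$ is excluded from their scope. The statement is nevertheless true: for odd $M$ the polynomial $B_M$ is antisymmetric about $x=1/2$, because every $Q_{2^j}$ with $j\geq 1$ is a polynomial in $P_2$ (by \eqref{poly1}, inductively) and hence symmetric about $x=1/2$, while $Q_1=x-1/2$ is antisymmetric; since $\mu_{K(\gamma)}$ is symmetric about $x=1/2$, the integral vanishes. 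This is precisely the symmetry argument the paper itself invokes in the remark before the theorem to handle the case $\min\{m,s\}=0$. With that substitution, your block-diagonality claim, and with it the rest of your argument, stands.
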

\begin{proof} Consider $P=a_0 B_{2^s}+a_1 B_{3\cdot2^s}+\ldots+a_{k-1}B_{(2k-1)\cdot2^s}+B_{(2k+1)\cdot2^s},$
where $(a_j)_{j=0}^{k-1}$ are chosen such that $P$ is orthogonal to all $B_{(2j+1)2^s}$ with $j=0,1,\ldots, k-1.$
This gives a system of $k$ linear equations with $k$ unknowns $(a_j)_{j=0}^{k-1}.$ The determinant of this
system is the Gram determinant of linearly independent functions $(B_{(2j+1)2^s})_{j=0}^{k-1}.$
Therefore it is positive and the system has a unique solution. In addition, as was remarked above,
$P$ is orthogonal to all $B_{(2j+1)\cdot2^m}$ with $m\ne s.$ Thus, $P$ is a monic polynomial of
degree $n$ that is orthogonal to all polynomials of degree $<n,$ so $P= Q_n.$
\end{proof}

\begin{Corollary}\label{cor3}
 The polynomial $Q_{2^s(2k+1)}$ is a linear combination of products of the type
 $Q_{2^{s_m}}\,Q_{2^{s_{m-1}}} \cdots Q_{2^s},$ so the smallest degree of $Q_{2^{s_j}}$
 in every product is $2^s$.
\end{Corollary}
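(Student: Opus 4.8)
The plan is to read the conclusion off directly from Theorem \ref{Bpol} together with the explicit factored form of the $B$-polynomials. Writing $n=2^s(2k+1)$, Theorem \ref{Bpol} already tells us that $Q_n$ is a linear combination of the $k+1$ polynomials $B_{2^s},B_{3\cdot 2^s},\ldots,B_{(2k+1)\cdot 2^s}$. So it will suffice to check that each $B_{(2j+1)\cdot 2^s}$, $0\le j\le k$, is itself a product of the asserted shape $Q_{2^{s_m}}Q_{2^{s_{m-1}}}\cdots Q_{2^s}$ with $s_m>s_{m-1}>\cdots>s$, whose least-degree factor is exactly $Q_{2^s}$. In other words, the products in the statement are precisely the $B$-polynomials indexed by odd multiples of $2^s$, and the corollary is essentially a reformulation of Theorem \ref{Bpol}.

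First I would unwind the definition of the $B$-polynomials. For $2^{m'}\le N<2^{m'+1}$ we have $B_N=(Q_{2^{m'}})^{i_{m'}}\cdots(Q_1)^{i_0}$, where $N=\sum_k i_k 2^k$ is the binary expansion and each $i_k\in\{0,1\}$. I apply this with $N=(2j+1)\cdot 2^s$. Since $2j+1$ is odd, multiplying by $2^s$ shifts every binary digit of $2j+1$ up by $s$ places, so the expansion of $N$ satisfies $i_k=0$ for all $k<s$ and $i_s=1$. Hence the product defining $B_N$ contains no factor $Q_{2^k}$ with $k<s$, while $Q_{2^s}$ occurs with exponent one. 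Relabelling the positions of the set bits as $s_m>s_{m-1}>\cdots>s$ exhibits $B_N=\prod_{k\ge s,\,i_k=1}Q_{2^k}=Q_{2^{s_m}}\cdots Q_{2^s}$, a product of distinct $Q_{2^k}$ whose smallest-degree factor is $Q_{2^s}$. Feeding this back into the linear combination from Theorem \ref{Bpol} gives the claim.

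I do not anticipate a genuine obstacle: the only point needing care is the elementary binary-digit observation that $(2j+1)\cdot 2^s$ has its lowest nonzero bit exactly at position $s$, which forces $Q_{2^s}$ to appear (with exponent one) in every admissible product and forbids any lower factor. Everything else is bookkeeping, and the statement follows at once from Theorem \ref{Bpol} and the definition of the $B$-polynomials.
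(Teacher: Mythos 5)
Your proposal is correct and is essentially the paper's own (implicit) argument: the corollary is stated without a separate proof precisely because it follows at once from Theorem \ref{Bpol} together with the observation that each $B_{(2j+1)\cdot 2^s}$ is, by definition and the binary expansion of $(2j+1)\cdot 2^s$ (lowest set bit at position $s$), a product of distinct factors $Q_{2^{s_j}}$ with smallest factor exactly $Q_{2^s}$. Nothing further is needed.
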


To illustrate the theorem, we consider, for given $s\in\mathbb{N}_0,$ the easiest cases with $k\leq 2$.
Clearly, $Q_{2^s}=B_{2^s}.$ Since $B_{3\cdot2^s}=Q_{2^s}Q_{2^{s+1}},$ we take
$Q_{3\cdot2^s}=a_0 Q_{2^s}+Q_{2^{s+1}} Q_{2^s},$ where $a_0$ is such that 
$\int Q_{3\cdot2^s}\,Q_{2^s}\,d\mu_{K(\gamma)}=0.$
 By Lemma \ref{int1},
$$Q_{3\cdot2^s}=Q_{2^{s+1}}Q_{2^s}-\frac{\|Q_{2^{s+1}}\|^{2}}{\|Q_{2^{s}}\|^{2}}Q_{2^s}.$$

Similarly, $B_{5\cdot2^s}=Q_{2^s}Q_{2^{s+2}}$ and $Q_{5\cdot2^s}=a_0 Q_{2^s}+a_1 Q_{2^{s+1}}Q_{2^s}
+ Q_{2^s}Q_{2^{s+2}}$ with
$$ a_0=\frac{||Q_{2^{s+2}}||^{2}}{||Q_{2^{s}}||^4 - ||Q_{2^{s+1}}||^2},\,\,\,
a_1= -a_0 \,\, \frac{||Q_{2^s}||^{2}}{\|Q_{2^{s+1}}\|^{2}}.$$

Using \eqref{norm1}, all coefficient can be expressed only in terms of $(\gamma_k)_{k=1}^\infty.$
As $k$ gets bigger, the complexity of calculations increases.

\begin{remark} In general, the polynomial $Q_n$ is not Chebyshev. For example,
$Q_3=Q_1(Q_2+a_0)$ with $a_0=- \frac{(1-2\gamma_2) \gamma_1^2}{1-2\gamma_1}.$ At least for small $\gamma_1,$ the polynomial $Q_3(x)=(x-1/2)(x^2-x+\gamma_1/2+a_0)$ increases on the first basic interval $I_{1,1}=[0, l_{1,1}].$
Here, $l_{1,1}$ is the first solution of $P_2=-r_1,$ so $l_{1,1}=(1-\sqrt{1-4\gamma_1})/2.$ If $Q_3$ is
the Chebyshev polynomial then, by the alternance argument,  $Q_3(l_{1,1})=Q_3(1),$ but it is not the case.
\end{remark}

\section{Jacobi parameters}

Since $\mu_{K(\gamma)}$ is supported on the real line, the polynomials $(Q_n)_{n=0}^{\infty}$
satisfy a three-term recurrence relation
$$Q_{n+1}(x) = (x- b_{n+1})Q_{n}(x) - a_n^2 \, Q_{n-1}(x),\,\,\,\,\,\,\,\,n \in \mathbb{N}_0. $$
The recurrence starts from $Q_{-1}:=0$ and $Q_0=1.$ The Jacobi parameters
$\{a_n,b_n\}_{n=1}^\infty$ define the matrix
\begin{equation}
\left( \begin{array}{ccccc}
b_1 & a_1 &0 & 0 &\ldots \\
a_1 & b_2 & a_2 & 0& \ldots \\
0& a_2 & b_2 & a_3 & \ldots \\
\vdots & \vdots & \vdots & \vdots&\ddots \\
\end{array} \right),
\end{equation}
where $\mu_{K(\gamma)}$ is is the spectral measure for the unit vector $\delta_1$ and the self-adjoint operator
$J$ on $l_2(\mathbb{N}),$ which is defined by this matrix. We are interested in the analysis of
asymptotic behavior of  $(a_n)_{n=1}^{\infty}.$ Since $\mu_{K(\gamma)}$ is symmetric about $x=1/2$,
all $b_n$ are equal to $1/2.$ It is known (see e.g. \cite{totik1}) that $a_n>0,\,\,||Q_n|| = a_1\cdots a_n,$
which, in turn, is the reciprocal to the leading coefficient of the orthonormal polynomial of degree $n$.

In the next lemmas we use the equality $\int Q_n\,Q_m\,Q_{n+m}\,d\mu_{K(\gamma)}=||Q_{n+m}||^2,$ which follows
by orthogonality of $Q_{n+m}$ to all polynomials of smaller degree.

\begin{lemma} \label{Jac1} For all $s\in\mathbb{N}_0$ and $k\in\mathbb{N}$ we have
$$ Q_{2^s(2k+1)}= Q_{2^s}\cdot Q_{2^{s+1}k}- \frac{\|Q_{2^{s+1}k}\|^2}{\|Q_{2^s(2k-1)}\|^2}\,\,Q_{2^s(2k-1)}.$$
\end{lemma}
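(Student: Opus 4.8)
The plan is to treat this as a three-term recurrence identity and establish it by verifying that the right-hand side is the monic orthogonal polynomial $Q_{2^s(2k+1)}$. Since both sides are monic of degree $2^s(2k+1)$, it suffices to show the right-hand side is orthogonal to every polynomial of smaller degree. The natural strategy is to exploit the $B$-polynomial framework and Corollary \ref{cor3}: $Q_{2^s(2k+1)}$ is a linear combination of products $Q_{2^{s_m}}\cdots Q_{2^s}$ in which the smallest factor is always $Q_{2^s}$, which means $Q_{2^s}$ divides $Q_{2^s(2k+1)}$ as a polynomial. The same is visibly true of both terms on the right-hand side, so the whole identity can be factored through $Q_{2^s}$, reducing the problem to a cleaner relation after dividing out this common factor.

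First I would establish the identity for $s=0$, i.e.
$$ Q_{2k+1}= Q_1\cdot Q_{2k}- \frac{\|Q_{2k}\|^2}{\|Q_{2k-1}\|^2}\,\,Q_{2k-1},$$
and then reduce the general $s$ to this base case. The reduction uses the self-similar structure already encoded in Corollary \ref{cor1} and \eqref{norm2}: passing from level $0$ to level $s$ replaces each $Q_{2^j}$ by $Q_{2^{s+j}}$, and the $B$-polynomial decomposition of Theorem \ref{Bpol} is compatible with this shift. Concretely, the building blocks $B_{2^s}, B_{3\cdot2^s},\dots,B_{(2k+1)2^s}$ spanning $Q_{2^s(2k+1)}$ are exactly the level-$s$ analogues of $B_1, B_3,\dots,B_{2k+1}$ spanning $Q_{2k+1}$, so the coefficients $(a_j)$ solving the Gram system in Theorem \ref{Bpol} depend on $s$ only through the norm ratios, which are controlled by \eqref{norm1}.

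To prove the base identity I would compute the coefficient of $Q_{2k-1}$ directly. Set $R:=Q_1 Q_{2k}-\lambda\, Q_{2k-1}$ and determine $\lambda$ by orthogonality. Because $R$ is automatically orthogonal to $Q_{2k+1}$'s lower-order partners of the wrong parity (by the symmetry/antisymmetry argument used for $B$-polynomials), the only condition that can fail is orthogonality against $Q_{2k-1}$ itself. Imposing $\int R\,Q_{2k-1}\,d\mu_{K(\gamma)}=0$ gives
$$ \lambda=\frac{\int Q_1 Q_{2k} Q_{2k-1}\,d\mu_{K(\gamma)}}{\|Q_{2k-1}\|^2},$$
and here the key computational input is the identity $\int Q_n Q_m Q_{n+m}\,d\mu_{K(\gamma)}=\|Q_{n+m}\|^2$ flagged just before the lemma, applied with $n=1$, $m=2k-1$, giving numerator $\|Q_{2k}\|^2$. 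This yields $\lambda=\|Q_{2k}\|^2/\|Q_{2k-1}\|^2$ as claimed.

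The main obstacle will be verifying that $R$ is orthogonal to \emph{all} polynomials of degree $<2k+1$, not merely to $Q_{2k-1}$; that is, confirming that the single coefficient $\lambda$ already forces full orthogonality. The delicate point is that $Q_1 Q_{2k}$ is not itself orthogonal to the intermediate $B$-polynomials $B_3,\dots,B_{2k-3}$ of the correct parity, so one must check that subtracting the single term $\lambda Q_{2k-1}$ suffices. I expect this to follow from Theorem \ref{int2} and Corollary \ref{cor2}: expanding $Q_1 Q_{2k}$ and $Q_{2k-1}$ in $B$-polynomials and pairing against each $B_{(2j+1)}$, the mixed integrals collapse via the substring analysis of Theorem \ref{int2}, leaving only the $j=k-1$ term to be cancelled. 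Making this cancellation transparent—rather than solving the full Gram system—is the crux, and I would lean on the recurrence \eqref{norm2} together with the multiplicativity $\|B_n\|^2=\prod_{i_k\ne0}\|Q_{2^k}\|^2$ to keep the bookkeeping finite.
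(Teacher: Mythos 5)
Your overall frame agrees with the paper's: show the right-hand side is monic of degree $2^s(2k+1)$ and orthogonal to every $Q_n$ with $n<2^s(2k+1)$, and fix the coefficient via $\int Q_nQ_mQ_{n+m}\,d\mu_{K(\gamma)}=\|Q_{n+m}\|^2$; your computation of $\lambda=\|Q_{2k}\|^2/\|Q_{2k-1}\|^2$ is exactly the paper's step for $n=2^s(2k-1)$. But the proposal has a genuine gap, and it sits precisely where the paper does its real work. The reduction of general $s$ to the base case $s=0$ by ``replacing each $Q_{2^j}$ by $Q_{2^{s+j}}$'' is not a proof step: no measure-compatible substitution implementing this shift is exhibited, the Gram entries genuinely change value (Theorem \ref{int2} produces products of \emph{different} norms at each level), and, decisively, your base-case argument leans on parity (antisymmetry about $x=1/2$), which has no analogue for $s\geq 1$. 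For $s\geq 1$ all of $Q_{2^s}Q_{2^{s+1}k}$, $Q_{2^s(2k-1)}$, and the even-degree $Q_n$ are symmetric, so parity only disposes of odd $n$, while the dangerous range $2^s(2k-1)<n<2^s(2k+1)$ contains $2^{s+1}-1$ values of $n$, roughly half of them even. This middle range is exactly what the paper's proof is about: writing $k=2^q(2l+1)$ and $n=2^m(2p+1)$ (necessarily $m\neq s$), it uses Corollary \ref{cor3} to expand $Q_{2^{s+1}k}$ and $Q_n$ into products of $Q_{2^{s_j}}$'s, so that $Q_{2^s}Q_{2^{s+1}k}Q_n$ is a combination of $A$-polynomials whose smallest factor carries exponent $1$, and then each integral vanishes by Lemma \ref{int1}(d). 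Nothing in your plan replaces this step; the self-similarity reduction hides it rather than proves it.

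You also misplace the difficulty at the bottom of the range. Orthogonality of $R$ to $Q_n$ (equivalently, to the intermediate $B$-polynomials) for $n<2^s(2k-1)$ is not a delicate cancellation requiring Theorem \ref{int2} and Corollary \ref{cor2}: it is immediate degree counting, since $\deg(Q_{2^s}Q_n)=2^s+n<2^{s+1}k$ forces $\int Q_{2^s}Q_nQ_{2^{s+1}k}\,d\mu_{K(\gamma)}=0$, and $Q_{2^s(2k-1)}\perp Q_n$ trivially. So the part you single out as ``the crux'' and leave open (``I expect this to follow\ldots'') is in fact trivial, while the actual crux --- the values of $n$ strictly between $2^s(2k-1)$ and $2^s(2k+1)$ when $s\geq 1$ --- is never addressed. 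To repair the proposal: drop the reduction to $s=0$, run the orthogonality check uniformly in $s$ with the three cases $n<2^s(2k-1)$ (degree counting), $n=2^s(2k-1)$ (your $\lambda$), and $2^s(2k-1)<n<2^s(2k+1)$ (Corollary \ref{cor3} plus Lemma \ref{int1}(d)), as the paper does.
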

\begin{proof} Consider the polynomial
$P=Q_{2^s}\cdot Q_{2^{s+1}k}- \frac{\|Q_{2^{s+1}k}\|^2}{\|Q_{2^s(2k-1)}\|^2}Q_{2^s(2k-1)}.$  It is a monic
polynomial of degree $2^s(2k+1)$. Let us show that $P$ is orthogonal to $Q_n$ for all $n$ with
$0\leq n < 2^s(2k+1)$. This will mean that $P=Q_{2^s(2k+1)}.$

Suppose  $0\leq n <2^s(2k-1)$. Then orthogonality follows by comparison of the degrees.

If $n=2^s(2k-1)$ then $\int P\,Q_n\,d\mu_{K(\gamma)}=0$ due to the choice of coefficient of the addend
 in $P$ and the remark above.

Let $2^s(2k-1)<n<2^s(2k+1)$. We show that $\int Q_{2^s}\,Q_{2^{s+1}k}\,Q_n\,d\mu_{K(\gamma)}=0.$
We write $k$ in the form $k=2^q(2l+1)$ with some $q, l \in \mathbb{N}_0.$ In turn, $n=2^m(2p+1)$ with
$m\ne s.$ By Corollary \ref{cor3}, $Q_{2^{s+1}k}$ is a linear combination of products of $Q_{2^{s_j}}$ with
$\min s_j= s+1+q$ in every product. Similarly for $ Q_n,$ but here the smallest degree
is $2^m$. Therefore, $Q_{2^s}\,Q_{2^{s+1}k}\,Q_n$ is a linear combination of $A-$polynomials
and for each $A-$polynomial the exponent of the smallest term is $1.$ By Lemma \ref{int1}(d), the corresponding integral
 is zero.
\end{proof}

\begin{lemma}\label{Jac2} For all $s\in\mathbb{N}_0$ and $k\in\mathbb{N}$ we have
$$a_{2^s(2k+1)}^2\, a_{2^s(2k+1)-1}^2\cdots a_{2^{s+1}k+1}^2+a_{2^{s+1}k}^2\,a_{2^{s+1}k-1}^2\cdots a_{2^{s+1}k-2^s+1}^2= \|Q_{2^s}\|^2.$$
\end{lemma}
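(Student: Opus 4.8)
The plan is to translate the claimed relation between Jacobi parameters into an identity for norms of orthogonal polynomials, and then to derive that identity from the recurrence of Lemma \ref{Jac1}. Since $\|Q_n\|=a_1\cdots a_n$, one has $\|Q_n\|^2=\prod_{j=1}^n a_j^2$, so consecutive products of the $a_j^2$ telescope into ratios of such norms. Using $2^s(2k+1)=2^{s+1}k+2^s$ and $2^s(2k-1)=2^{s+1}k-2^s$, the first product on the left is $\|Q_{2^s(2k+1)}\|^2/\|Q_{2^{s+1}k}\|^2$ and the second is $\|Q_{2^{s+1}k}\|^2/\|Q_{2^s(2k-1)}\|^2$. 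Thus Lemma \ref{Jac2} is equivalent to
$$\frac{\|Q_{2^s(2k+1)}\|^2}{\|Q_{2^{s+1}k}\|^2}+\frac{\|Q_{2^{s+1}k}\|^2}{\|Q_{2^s(2k-1)}\|^2}=\|Q_{2^s}\|^2,$$
and after multiplying by $\|Q_{2^{s+1}k}\|^2$ it suffices to prove
$$\|Q_{2^s(2k+1)}\|^2=\|Q_{2^s}\|^2\,\|Q_{2^{s+1}k}\|^2-\frac{\|Q_{2^{s+1}k}\|^4}{\|Q_{2^s(2k-1)}\|^2}.$$

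To obtain this last equality I would take the squared norm of both sides of Lemma \ref{Jac1}, writing $Q_{2^s(2k+1)}=Q_{2^s}Q_{2^{s+1}k}-\lambda\,Q_{2^s(2k-1)}$ with $\lambda=\|Q_{2^{s+1}k}\|^2/\|Q_{2^s(2k-1)}\|^2$. Expanding produces three contributions. The mixed term is $-2\lambda\int Q_{2^s}Q_{2^{s+1}k}Q_{2^s(2k-1)}\,d\mu_{K(\gamma)}$; since $2^s+2^s(2k-1)=2^{s+1}k$, the product remark preceding Lemma \ref{Jac1} identifies this integral as $\|Q_{2^{s+1}k}\|^2$. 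Combined with the term $\lambda^2\|Q_{2^s(2k-1)}\|^2$, these collapse to $-\|Q_{2^{s+1}k}\|^4/\|Q_{2^s(2k-1)}\|^2$, which is exactly the negative term on the desired right-hand side, provided the remaining term is correctly evaluated.

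The remaining term, and the main obstacle, is the identity $\int Q_{2^s}^2\,Q_{2^{s+1}k}^2\,d\mu_{K(\gamma)}=\|Q_{2^s}\|^2\,\|Q_{2^{s+1}k}\|^2$. By Corollary \ref{cor3}, every product occurring in $Q_{2^{s+1}k}$ has smallest factor $Q_{2^{s'}}$ with $s'\geq s+1>s$; hence in the expansion of $Q_{2^{s+1}k}^2$ this lowest block occurs to the power $2$ in each summand. Multiplying by $Q_{2^s}^2$ yields, term by term, $A$-polynomials in which the two lowest blocks $Q_{2^s}$ and $Q_{2^{s'}}$ both carry exponent $2$, so Corollary \ref{cor2} extracts the factor $\|Q_{2^s}\|^2$ from each summand. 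Summing restores $\int Q_{2^{s+1}k}^2\,d\mu_{K(\gamma)}=\|Q_{2^{s+1}k}\|^2$, proving the identity for $s\geq 1$.

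The case $s=0$ lies outside the $A$-polynomial formalism, which requires all factors at levels $\geq 1$; there I would instead write $Q_1^2=Q_2+\|Q_1\|^2$ via \eqref{norm2}, reducing the claim to showing $\int Q_2\,Q_{2k}^2\,d\mu_{K(\gamma)}=0$, which follows by repeated use of \eqref{norm2} together with the vanishing cases of Lemma \ref{int1} and the symmetry/antisymmetry of the factors about $x=1/2$. Substituting the evaluated term and dividing by $\|Q_{2^{s+1}k}\|^2$ then returns the equivalent norm identity, and hence Lemma \ref{Jac2}. I expect the bookkeeping in the key identity $\int Q_{2^s}^2 Q_{2^{s+1}k}^2\,d\mu_{K(\gamma)}=\|Q_{2^s}\|^2\|Q_{2^{s+1}k}\|^2$ to be the delicate point, since it hinges on the uniform appearance of the squared lowest block guaranteed by Corollary \ref{cor3}.
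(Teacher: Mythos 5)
Your proof is correct and follows essentially the same route as the paper: telescoping the products $a_j^2$ into ratios of norms via $\|Q_n\|=a_1\cdots a_n$, expanding the square of the recurrence in Lemma \ref{Jac1} with the mixed term evaluated by the remark $\int Q_nQ_mQ_{n+m}\,d\mu_{K(\gamma)}=\|Q_{n+m}\|^2$, and reducing the key identity $\int Q_{2^s}^2\,Q_{2^{s+1}k}^2\,d\mu_{K(\gamma)}=\|Q_{2^s}\|^2\|Q_{2^{s+1}k}\|^2$ to Corollaries \ref{cor3} and \ref{cor2}. Your separate treatment of $s=0$ is a reasonable extra precaution, since the paper's $A$-polynomial formalism formally requires $s_1>0$ (a point the paper's own proof passes over silently), but it does not change the substance of the argument.
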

\begin{proof}
By Lemma \ref{Jac1} and the remark above,
\begin{equation}\label{int}
||Q_{2^s(2k+1)}||^2=\int Q_{2^s}^2\,Q_{2^{s+1}k}^2 d\mu_{K(\gamma)}-\frac{\|Q_{2^{s+1}k}\|^4}{\|Q_{2^s(2k-1)}\|^2}. \end{equation}
Let us show that
$$ \int Q_{2^s}^2\,Q_{2^{s+1}k}^2 d\mu_{K(\gamma)}= ||Q_{2^s}||^2 ||Q_{2^{s+1}k}||^2.$$

If $k=2^m,$ we have  this immediately, by Lemma \ref{int1}(a).

Otherwise, $2^{s+1}k=2^m(2l+1)$ with $l\in\mathbb{N}$ and $m\geq s+1.$ Then, by Corollary \ref{cor3},
 $Q_{2^{s+1}k}$ is a linear combination of products $Q_{2^{s_q}}\cdots Q_{2^{s_j}} \cdots Q_{2^m}$
with $s_j>m$ except the last term. From here, $Q_{2^{s+1}k}^2= Q_{2^m}^2 \cdot\sum\alpha_j\,A_j,$
where $\sum\alpha_j\,A_j$ is a linear combination of $A-$type polynomials with $s_1>m$ for each
$A_j.$ Therefore,
$$||Q_{2^{s+1}k}||^2 =\sum \alpha_j \int A_j Q_{2^m}^2 \,d\mu_{K(\gamma)}.$$

On the other hand,
$$\int Q_{2^s}^2\,Q_{2^{s+1}k}^2 d\mu_{K(\gamma)}=\sum \alpha_j \int A_j Q_{2^m}^2 \,Q_{2^s}^2\,d\mu_{K(\gamma)}.$$
By Corollary \ref{cor2}, this is $||Q_{2^s}||^2 ||Q_{2^{s+1}k}||^2.$

Therefore, \eqref{int} can be written as
$$\frac{\|Q_{2^s(2k+1)}\|^2}{\|Q_{2^{s+1}k}\|^2}=
\|Q_{2^{s}}\|^2 - \frac{\|Q_{2^{s+1}k}\|^2}{\|Q_{2^s(2k-1)}\|^2},$$
which is the desired result, as $a_n=\|Q_n\|\mathbin{/}{\|Q_{n-1}\|}.$
\end{proof}

\begin{theorem}\label{jacjac} The recurrence coefficients ${(a_n)}_{n=1}^\infty$ can be calculated recursively by using Lemma \ref{Jac2} and \eqref{norm1}.
\end{theorem}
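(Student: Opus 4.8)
The plan is to prove, by strong induction on $n$, that every $a_n$ is uniquely determined by $\gamma$ through a finite computation, where at each step we invoke either \eqref{norm1} or Lemma \ref{Jac2}. The key auxiliary fact is $\|Q_n\|=a_1\cdots a_n$ together with $\|Q_0\|=1$ (as $\mu_{K(\gamma)}$ is a probability measure), so that once $a_1,\dots,a_{n-1}$ are known the quantity $\|Q_{n-1}\|^2=\prod_{j=1}^{n-1}a_j^2$ is known as well. For the base case $n=1$, formula \eqref{norm1} gives $\|Q_1\|^2=(1-2\gamma_1)/4$, whence $a_1=\|Q_1\|/\|Q_0\|=\|Q_1\|$.

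For the inductive step I would assume $a_1,\dots,a_{n-1}$ computed and write $n=2^s(2k+1)$ with $s\in\mathbb{N}_0$ and $k\in\mathbb{N}_0$, using the $2$-adic valuation, so this factorization is unique. First I treat the powers of two, i.e. $k=0$ and $n=2^s$, where Lemma \ref{Jac2} does not apply: here \eqref{norm1} yields $\|Q_{2^s}\|^2=(1-2\gamma_{s+1})r_s^2/4$ directly, and dividing by the known value $\|Q_{2^s-1}\|^2=\prod_{j=1}^{2^s-1}a_j^2$ gives $a_{2^s}^2$, from which $a_{2^s}$ is recovered as the positive root.

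For $k\geq 1$ I apply Lemma \ref{Jac2} with these $s,k$. The right-hand side $\|Q_{2^s}\|^2$ is already known by the power-of-two case. On the left, the second product runs over the indices $2^{s+1}k-2^s+1,\dots,2^{s+1}k$, all strictly less than $n$ because $2^{s+1}k=n-2^s<n$; the first product runs over $2^{s+1}k+1,\dots,2^{s+1}k+2^s=n$, so its largest index is exactly $n$ while the rest are $<n$. Hence the only unknown in the identity is $a_n^2$, occurring to the first power with the strictly positive coefficient $\prod_{j=2^{s+1}k+1}^{n-1}a_j^2$ (positive since every $a_j>0$). Solving this linear equation determines $a_n^2$, and since the $a_n$ exist and are positive by the general theory of orthogonal polynomials, the positive root recovers $a_n$. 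This closes the induction and exhibits the advertised recursive scheme.

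The heart of the argument — and the only place where anything could go wrong — is the index bookkeeping in the general step: one must verify that in Lemma \ref{Jac2} the top index of the first product coincides with $n$, that $a_n$ enters only that product and only to the first power, and that every other index in both products is strictly smaller than $n$, so that they fall under the induction hypothesis. Granting the uniqueness of the factorization $n=2^s(2k+1)$, these three facts reduce to the elementary identities $2^s(2k+1)=2^{s+1}k+2^s$ and $2^{s+1}k<n$, which is precisely the mild obstacle to check carefully before the computation becomes routine.
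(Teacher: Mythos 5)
Your proof is correct and follows essentially the same route as the paper: strong induction on $n$, with the power-of-two case $n=2^s$ handled directly by \eqref{norm1} and the case $n=2^s(2k+1)$, $k\geq 1$, handled by solving Lemma \ref{Jac2} for the single unknown $a_n^2$, using positivity of the $a_j$ to divide and to take the positive root. Your treatment of $s=0$ via the empty-product convention is just a cleaner packaging of the paper's remark that the denominator in \eqref{rec2} is then absent.
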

\begin{proof} We already have $a_1=||Q_1||$ and
$a_2=||Q_2||\mathbin{/}{||Q_1||}$.
Suppose, by induction, that all $a_i$ are given up to $i=n$. If $n+1=2^s>2$  then
\begin{equation}\label{rec1}
a_{n+1}=\frac{||Q_{2^s}||}{||Q_{2^{s-1}}|| \cdot a_{2^{s-1}+1}\cdot a_{2^{s-1}+2}\cdots a_{2^{s}-1}},
\end{equation}
where the norms of polynomials can be found by  \eqref{norm1}.

Otherwise, $n+1=2^s(2k+1)$ for some $s\in\mathbb{N}_0$ and $k\in\mathbb{N}.$ By Lemma \ref{Jac2}, we have
\begin{equation}\label{rec2}
 a_{n+1}^2= a_{2^s(2k+1)}^2=\frac{\|Q_{2^s}\|^2-a_{2^{s+1}k}^2\cdots a_{2^{s+1}k-2^s+1}^2}
{ a_{2^s(2k+1)-1}^2\cdots a_{2^{s+1}k+1}^2},
\end{equation}
provided $s\ne 0.$ If $s=0$ then the denominator in the fraction above is absent.
This gives $a_{n+1},$ since the recurrence coefficients are positive.
\end{proof}

In order to  illustrate the theorem, let us consider the cases of small $s$.\\
If $s=0$ then $n+1=2k+1$ and $a_{2k+1}^2=a_1^2-a_{2k}^2.$ Next, for $s=1$ and $s=2$,
$$ a_{4k+2}^2=\frac{ ||Q_{2}||^2-a_{4k}^2\,a_{4k-1}^2}{a_{4k+1}^2}, \,\,
a_{8k+4}^2=\frac{ ||Q_{4}||^2-a_{8k}^2\,a_{8k-1}^2\,a_{8k-2}^2\,a_{8k-3}^2}{a_{8k+3}^2\,a_{8k+2}^2\,a_{8k+1}^2},
\,\,\,\mbox {etc}. $$
Thus, $a_1=\frac{\sqrt{1-2\gamma_1}}{2},\, a_2= \frac{\sqrt{1-2\gamma_2}}{\sqrt{1-2\gamma_1}}\,\gamma_1,\,
a_3^2= a_1^2-a_{2}^2,\,\, a_4=\frac{\gamma_1 \gamma_2\,\sqrt{1-2\gamma_3}}{ a_3\,\sqrt{1-2\gamma_2}}, \,\, a_5^2=a_1^2-a_4^2,$ etc.\\

\begin{remark}
If $\gamma_n<1/4$ for $1\leq n\leq s$ and $\gamma_n=1/4$ for $n>s$ then
$K(\gamma)= E_s =(2P_{2^s}/r_s+1)^{-1}[-1,1]$. Here $(P_{2^n}+r_n/2)_{n=0}^{\infty}$
are the Chebyshev polynomials for $E_s$, as is easy to check. Therefore Theorems \ref{main1} and \ref{jacjac}
are applicable for this case as well. For further information about Jacobi parameters corresponding equilibrium measures of polynomial inverse images, we refer the reader to the article \cite{van assche}.
\end{remark}
\begin{remark}
Suppose $\gamma_n=1/4$ for $n\leq N$ with $2^s \leq N < 2^{s+1}.$ Then
$a_1=1/\sqrt 8$ and $a_2=a_3=\cdots = a_{2^{s+1}-1}=1/4.$ In particular, if $\gamma_n=1/4$ for all $n$
then $a_n=1/4$ for all $n\geq 2,$ which corresponds to the case of the Chebyshev polynomials
of first kind on $[0,1].$
\end{remark}

\begin{lemma}\label{Jac3}
 Suppose $\gamma_s\leq 1/6$ for all $s.$ For fixed $s\in\mathbb{N}_0,$ let $c=\frac{4\gamma_{s+1}^2}{(1-2\gamma_{s+1})^2}$ and $C=\frac{2}{1+\sqrt{1-4c}}$.
 Then the following inequalities hold with $k\in\mathbb{N}_0$:
\begin{enumerate}[label=(\alph*)]
\item If $n=2^s(2k+1)$  then
$$ \frac{1}{2}||Q_{2^s}||^2 \leq C^{-1}\,||Q_{2^s}||^2 \leq a_n^2\cdots a_{n-2^s+1}^2 \leq ||Q_{2^s}||^2. $$
\item If $n=2^s(2k+2)$ then
$$ a_n^2 \cdots a_{n-2^s+1}^2 \leq C\,\frac{ \|Q_{2^{s+1}}\|^2}{\|Q_{2^s}\|^2}\leq 2 \frac{ \|Q_{2^{s+1}}\|^2}{\|Q_{2^s}\|^2}. $$
\end{enumerate}
\end{lemma}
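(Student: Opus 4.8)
The plan is to prove both parts simultaneously by a single induction, since they are genuinely coupled. For $s\in\mathbb N_0$ and $m\in\mathbb N$ write
$$\pi_{s,m}:=a_{2^s m}^2\,a_{2^s m-1}^2\cdots a_{2^s(m-1)+1}^2$$
for the product of the $2^s$ consecutive squared recurrence coefficients with indices in $(2^s(m-1),\,2^s m]$. Setting $A_s:=\|Q_{2^s}\|^2$ and $B_s:=\|Q_{2^{s+1}}\|^2$, the claim is that $A_s/C\le\pi_{s,m}\le A_s$ for odd $m$ (this is part (a)) and $\pi_{s,m}\le C\,B_s/A_s$ for even $m$ (part (b)). Three ingredients feed the argument. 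First, $\pi_{s,1}=(a_1\cdots a_{2^s})^2=A_s$, while Lemma \ref{Jac2} is exactly the relation $\pi_{s,2k+1}+\pi_{s,2k}=A_s$ for $k\ge1$. Second, splitting a block of $2^{s+1}$ indices into its two halves yields the factorization $\pi_{s,2k+1}\,\pi_{s,2k+2}=\pi_{s+1,k+1}$. Third, from \eqref{norm1} and $r_{s+1}=\gamma_{s+1}r_s^2$ one computes $B_s/A_s^2=(1-2\gamma_{s+2})\,c\le c$, while the hypothesis $\gamma_{s+1}\le1/6$ gives $c\le1/4$, so $C\in[1,2]$ and, crucially, $C$ is the smaller root of $c\,x^2-x+1=0$; equivalently $cC=1-1/C$.

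I would then run the induction on the multiplier index $m$, uniformly over all levels $s$ at once. The key point that makes this legitimate is that the level-raising factorization sends the index $2k+2$ at level $s$ to the strictly smaller index $k+1$ at level $s+1$, so every recursive call lowers $m$. The base case $m=1$ is immediate: $\pi_{s,1}=A_s$ satisfies $A_s/C\le A_s\le A_s$ because $C\ge1$. For the odd step $m=2k+1$ with $k\ge1$, Lemma \ref{Jac2} gives $\pi_{s,2k+1}=A_s-\pi_{s,2k}$; the upper bound $\pi_{s,2k+1}\le A_s$ is immediate from positivity, and for the lower bound I would insert the inductive estimate $\pi_{s,2k}\le C\,B_s/A_s\le C\,c\,A_s=(1-1/C)A_s$, which forces $\pi_{s,2k+1}\ge A_s/C$. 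The identity $cC=1-1/C$ is precisely what makes these two sides match, and this is where the exact value of $C$ is used.

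For the even step $m=2k+2$ I would write $\pi_{s,2k+2}=\pi_{s+1,k+1}/\pi_{s,2k+1}$ via the factorization, bound the denominator below by $A_s/C$ using the odd case at the smaller index $2k+1$, and bound the numerator above by $B_s$. Combining gives $\pi_{s,2k+2}\le B_s/(A_s/C)=C\,B_s/A_s$, and finally $C\le2$ turns $A_s/C\ge A_s/2$ and $C\,B_s/A_s\le2\,B_s/A_s$, recovering the outer inequalities. I expect the main obstacle to be the single auxiliary estimate $\pi_{s+1,k+1}\le B_s=A_{s+1}$: when $k+1$ is odd this is just the part-(a) upper bound at level $s+1$, but when $k+1$ is even it requires the part-(b) bound at level $s+1$ together with $C'\,B_{s+1}/B_s\le B_s/2<B_s$ (here $c',C'$ are the level-$(s+1)$ analogues of $c,C$), which holds because $B_{s+1}/B_s^2\le c'\le1/4$ and $C'\le2$. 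Recognizing that parts (a) and (b) cannot be separated, and that the correct induction variable is the multiplier $m$ carried across all levels at once rather than $s$ or the absolute index $n$, is the conceptual crux; once that is in place the condition $\gamma\le1/6$ enters only through $c\le1/4$ and $C\le2$.
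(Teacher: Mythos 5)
Your proof is correct, and its skeleton is the same as the paper's: the same block products $\pi_{s,m}$, the same reading of Lemma \ref{Jac2} as $\pi_{s,2k+1}+\pi_{s,2k}=\|Q_{2^s}\|^2$ for $k\ge 1$, the same factorization $\pi_{s,2k+1}\,\pi_{s,2k+2}=\pi_{s+1,k+1}$, and the same closing of the odd step via $\|Q_{2^{s+1}}\|^2\le c\,\|Q_{2^s}\|^4$ together with $cC=1-1/C$. The one genuine difference is how the numerator bound $\pi_{s+1,k+1}\le\|Q_{2^{s+1}}\|^2$ is obtained, and here the paper is leaner: it observes that this inequality is immediate from Lemma \ref{Jac2} applied at level $s+1$ plus positivity --- for every $j\ge 2$ the block $\pi_{s+1,j}$ occurs as one of the two positive summands in an instance of $\pi_{s+1,2k+1}+\pi_{s+1,2k}=\|Q_{2^{s+1}}\|^2$ (the first summand if $j$ is odd, the second if $j$ is even), so it is at most the right-hand side, with no induction at all. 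Consequently the paper can fix $s$ and induct on $k$ alone: part (b) at $k$ yields the lower bound in (a) at $k+1$, and (a) at $k+1$ together with this free numerator bound yields (b) at $k+1$. You instead route the numerator bound through the lemma's own statement at level $s+1$, which is exactly what forces your uniform-over-$s$ induction on the multiplier $m$ and the extra estimate $C'\,\|Q_{2^{s+2}}\|^2/\|Q_{2^{s+1}}\|^2\le\|Q_{2^{s+1}}\|^2/2$ in the sub-case where $k+1$ is even. So the cross-level coupling you single out as the conceptual crux is in fact avoidable; what each approach buys is clear: yours keeps every step inside the lemma's statement with a single well-founded induction, while the paper's version is shorter and stays at a single level at the cost of one additional direct appeal to Lemma \ref{Jac2}.
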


\begin{proof}Note that, if $\gamma_{s+1}$ increases from $0$ to $1/6,$ then $c$ increases from $0$ to $1/4$
and $C$ increases from $1$ to $2$. By \eqref{norm1} and the definition of $r_s$, we get
\begin{equation}\label{c}
\frac{ \|Q_{2^{s+1}}\|^2}{\|Q_{2^s}\|^2 }=\gamma_{s+1}^2\, r_s^2\,\,\frac{1-2\gamma_{s+2}}{1-2\gamma_{s+1}}
=(1-2\gamma_{s+2}) \, c \, \|Q_{2^s}\|^2 < \|Q_{2^s}\|^2/4.
\end{equation}

We proceed by induction. For a fixed $s\in\mathbb{N}_0,$ let $k=0$. Then we have at once
 $$a_{2^s}^2\cdots a_{1}^2=\|Q_{2^s}\|^2\,\,\,\,\,  \mbox{ and} \,\,\,\,\,
 a_{2^{s+1}}^2\cdots a_{2^{s}+1}^2= \frac{ \|Q_{2^{s+1}}\|^2}{\|Q_{2^s}\|^2 }.$$

 Suppose $(a), \,(b)$ are satisfied for $k\leq m.$ We apply Lemma \ref{Jac2} with $k=m+1:$
 $$  a_{2^s(2m+3)}^2\cdots a_{2^s(2m+2)+1}^2 +  a_{2^s(2m+2)}^2\cdots a_{2^s(2m+2)-2^s+1}^2 = \|Q_{2^s}\|^2,$$
 where for the addend we can use $(b)$ for $k=m.$ Therefore,
$$\|Q_{2^s}\|^2-C\frac{ \|Q_{2^{s+1}}\|^2}{\|Q_{2^s}\|^2 } \leq a_{2^s(2m+3)}^2\cdots a_{2^s(2m+2)+1}^2 \leq
\|Q_{2^s}\|^2,$$
which is $(a)$ for  $k=m+1,$ by \eqref{c}.\\

Next, we claim that
\begin{equation}\label{means}
a_{2^s(2m+4)}^2\cdots a_{2^s(2m+2)+1}^2\leq \|Q_{2^{s+1}}\|^2
\end{equation}
for $m\in\mathbb{N}_0$. If $m=2l+1$ then we use Lemma \ref{Jac2} with $s+1$
instead of $s:$
$$a_{2^{s+1}(2k+1)}^2\cdots a_{2^{s+2}k+1}^2+ \,\,\,\,positive \,\,\,term\,\,\,=\|Q_{2^{s+1}}\|^2,$$
which implies  \eqref{means}, if we take $k=l+1,$  as $2(2k+1)=2m+4,\,\,4k=2m+2.$\

Suppose $m$ is even. Lemma \ref{Jac2} now gives
$$positive \,\,\,term\,\,\, + a_{2^{s+2}k}^2\cdots a_{2^{s+2}k-2^{s+1}+1}^2=\|Q_{2^{s+1}}\|^2,$$
where we take $k=m/2+1.$ Thus,  \eqref{means}  holds true in  both cases.

 Putting together $(a)$ for  $k=m+1$ and  \eqref{means} we get $(b)$ for   $k=m+1$ .
\end{proof}

\begin{theorem}\label{peri1}Let $\gamma_s\leq 1/6$ for all $s$. Then $\displaystyle\lim_{s\rightarrow \infty} a_{j\cdot 2^s+n}=a_n$ for $j\in\mathbb{N}$ and $n\in\mathbb{N}_0$.
Here, $a_0:=0.$ In particular, $\liminf a_n=0.$
\end{theorem}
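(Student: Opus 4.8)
The plan is to prove the statement by induction on the offset $n$, treating the case $n=0$ (which is precisely the assertion $\liminf a_n=0$) separately, as it is the heart of the matter. Throughout I would use uniform boundedness of the coefficients: Lemma \ref{Jac2} at level $s=0$ gives $a_{2k}^2+a_{2k+1}^2=\|Q_1\|^2$ for every $k\ge 1$, so $a_m\le a_1$ for all $m\ge 2$; I would also use the telescoping identity $a_m^2\cdots a_{m-L+1}^2=\|Q_m\|^2/\|Q_{m-L}\|^2$ together with $\|Q_0\|^2=1$.

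\textbf{Base case.} First I would establish the two-sided bound $\kappa_m\|B_m\|^2\le\|Q_m\|^2\le\|B_m\|^2$, where $m=\sum i_k2^k$ and $\kappa_m=\prod_{i_k=1}C_k^{-1}$ with $C_k$ the constant of Lemma \ref{Jac3} at level $k$. The upper estimate is the minimality of the monic orthogonal polynomial applied to $P=B_m$. For the lower one I decompose $\{1,\dots,m\}$ greedily into dyadic blocks, one per set bit of $m$: read from the top bit down, the $j$-th block is $\{2^{k_j}t-2^{k_j}+1,\dots,2^{k_j}t\}$ with $t$ odd, hence an $(a)$-block, whose product of $a_i^2$ is at least $C_{k_j}^{-1}\|Q_{2^{k_j}}\|^2$ by Lemma \ref{Jac3}; multiplying these and using $\|Q_m\|^2=\prod_{i\le m}a_i^2$ and $\|B_m\|^2=\prod_{i_k=1}\|Q_{2^k}\|^2$ gives the claim.

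Fixing $j$ and setting $S=v_2(j2^s)=s+v_2(j)$, the digits of $j2^s-1$ are those of $j2^s$ above position $S$ together with a full block $0,\dots,S-1$, so the $B$-norms telescope to $\|B_{j2^s}\|^2/\|B_{j2^s-1}\|^2=\|Q_{2^S}\|^2/\prod_{k=0}^{S-1}\|Q_{2^k}\|^2$. Combining with the two-sided bound yields $a_{j2^s}^2=\|Q_{j2^s}\|^2/\|Q_{j2^s-1}\|^2\le C_j\,D_S$, where $C_j$ depends only on $j$ and $D_s:=\big(\prod_{k=0}^{s-1}C_k\big)\|Q_{2^s}\|^2/\prod_{k=0}^{s-1}\|Q_{2^k}\|^2$. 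A short computation with \eqref{c} gives $D_s/D_{s-1}=(1-2\gamma_{s+1})\,c_{s-1}C_{s-1}$ with $c_{s-1}=4\gamma_s^2/(1-2\gamma_s)^2$; the hypothesis $\gamma_s\le1/6$ forces $c_{s-1}\le1/4$, whence $c_{s-1}C_{s-1}\le1/2$ and $D_s\le2^{-s}\|Q_1\|^2\to0$. As $S\ge s$, this proves $a_{j2^s}\to0=a_0$, i.e. the base case and $\liminf a_n=0$.

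\textbf{Inductive step.} Fix $n\ge1$, write $n=2^v(2p+1)$, and assume the result for all offsets $<n$. I would apply Lemma \ref{Jac2} at level $v$ with $k=j2^{s-v-1}+p$ (an integer once $s>v$), so its two $2^v$-fold products run over the offsets $[n-2^v+1,\,n]$ and $[n-2^{v+1}+1,\,n-2^v]$ measured from $j2^s$. If $p\ge1$, all these offsets lie in $[1,n-1]$ except the top one, so letting $s\to\infty$ the induction hypothesis makes every factor of offset $<n$ converge to the corresponding $a_{\mathrm{offset}}$; solving for $a_{j2^s+n}^2$ then produces exactly the relation that Lemma \ref{Jac2} (level $v$, $k=p$) imposes on $a_n^2$, so the limit is $a_n^2$. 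If $p=0$, i.e. $n=2^v$, the lower product is the incoming block to $j2^s$ whose top factor $a_{j2^s}^2\to0$ (base case) while the rest stay bounded, so it vanishes in the limit; solving gives $\lim a_{j2^s+n}^2=\|Q_{2^v}\|^2/\prod_{o=1}^{2^v-1}a_o^2=a_{2^v}^2$ via $\prod_{o=1}^{2^v}a_o^2=\|Q_{2^v}\|^2$.

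The only delicate point is the base case: one must convert the block estimates of Lemma \ref{Jac3}, which control only products of $2^s$ consecutive coefficients, into decay of the single coefficient $a_{j2^s}$. The two-sided norm bound is precisely what isolates this term, and the geometric decay $D_s\le\tfrac12 D_{s-1}$ is exactly where $\gamma_s\le1/6$ enters through $c_kC_k\le1/2$. Once the base case is secured, the inductive step is routine bookkeeping of offsets in Lemma \ref{Jac2}.
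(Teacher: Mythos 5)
Your proof is correct, and its skeleton is the same as the paper's: first establish $\lim_{s\to\infty}a_{j\cdot 2^s}=0$, then induct on the offset $n$ by writing $n=2^v(2p+1)$, applying Lemma \ref{Jac2} at level $v$ with $k=j2^{s-v-1}+p$, and matching the limiting relation against Lemma \ref{Jac2} applied to $n$ itself (legitimate since all $a_k>0$). The differences lie in how the base case is packaged. The paper isolates $a_{j\cdot 2^s}^2$ inside a window of length $2^{s+l}$ (where $j=2^l(2k+1)$): it divides the upper bound of Lemma \ref{Jac3}(a) for the block ending at $j\cdot 2^s$ by the lower bounds $\tfrac12\|Q_{2^i}\|^2$ for the descending dyadic blocks $[\,j2^s-2^{i+1}+1,\,j2^s-2^i\,]$, $i<s+l$, which tile the rest of the window. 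You instead run the ratio all the way down to index $1$ via $a_{j2^s}^2=\|Q_{j2^s}\|^2/\|Q_{j2^s-1}\|^2$ and the sandwich $\kappa_m\|B_m\|^2\le\|Q_m\|^2\le\|B_m\|^2$; your greedy decomposition of $j2^s-1$ produces exactly the paper's descending blocks for the low-order bits, plus blocks for the high-order bits whose constants are absorbed into $C_j$, and your upper bound uses monic minimality where the paper uses Lemma \ref{Jac3}(a) again. Both routes then invoke the same decay engine, \eqref{c} (i.e.\ $\|Q_{2^s}\|^2\le\tfrac14\|Q_{2^{s-1}}\|^4$), to get geometric decay; your $D_s\le\tfrac12 D_{s-1}$ is the paper's bound $2^{-s-l+2}$ in different clothing. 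What your version buys: the two-sided comparability of $\|Q_m\|^2$ with $\|B_m\|^2$ is a clean statement of independent interest, and your treatment of the offsets $n=2^v$ is more transparent than the paper's — the paper disposes of its $q=0$ case with the identity ``$j\cdot 2^s+m+1=j\cdot 2^{s-p}+1$,'' which is garbled as written, whereas you show directly that the incoming block $[\,j2^s-2^v+1,\,j2^s\,]$ vanishes in the limit because its top factor $a_{j2^s}^2\to 0$ while the remaining factors are bounded by $a_1^2$ (from $a_{2k}^2+a_{2k+1}^2=\|Q_1\|^2$), and then use $\prod_{o=1}^{2^v}a_o^2=\|Q_{2^v}\|^2$.
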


\begin{proof} We first show that $\displaystyle\lim_{s\rightarrow \infty} a_{j\cdot 2^s}=0$
for all $j\in\mathbb{N}.$
Let $j=2^l(2k+1)$ where $k,l\in\mathbb{N}_0.$ For $s>0$, the Jacobi parameters admit the following
inequality by Lemma \ref{Jac3}(a):
\begin{equation}\label{ineq2}
a_{2^{s+l}(2k+1)}^2\ldots a_{2^{s+l+1}k+1}^2\leq \|Q_{2^{s+l}}\|^2.
\end{equation}
If $i<s+l$ where $i\in\mathbb{N}_0$, we have $2^{s+l}(2k+1)-2^i= 2^i(2^{s+l-i}(2k+1)-1).$
Since $2^{s+l-i}(2k+1)-1$ is a positive odd number, by  Lemma \ref{Jac3}(a), we have the inequalities
$$
\frac{1}{2} \|Q_{2^i}\|^2 \leq a_{2^{s+l}(2k+1)-2^i}^2\ldots a_{2^{s+l}(2k+1)-2^{i+1}+1}^2
\,\,\,\mbox {for} \,\,\,i=0,\ldots, s+l-1.
$$
We multiply these $s+l$ inequalities side by side:
$$ 2^{-s-l} ||Q_1||^2 \cdots ||Q_{2^{s+l-1}}\|^2\leq a_{2^{s+l}(2k+1)-1}^2\cdots a_{2^{s+l+1}k+1}^2$$
and use \eqref{ineq2}:
$$a_{j\cdot 2^s}^2 = a_{2^{s+l}(2k+1)}^2 \leq \frac{2^{s+l}\|Q_{2^{s+l}}\|^2}{\|Q_{2^{s+l-1}}\|^2\|Q_{2^{s+l-2}}\|^2\ldots \|Q_1\|^2}.$$
By \eqref{c}, the fraction above is  bounded by $2^{-s-l+2}.$ Thus, $\displaystyle\lim_{s\rightarrow \infty} a_{j\cdot 2^s}=0.$\\

If $n=1$ then $a_{j\cdot 2^s+1}^2=a_1^2-a_{j\cdot 2^s}^2 \to a_1^2,$ which is our claim.\\

Suppose, by induction, that $\displaystyle\lim_{s\rightarrow \infty} a_{j\cdot 2^s+n}=a_n$ for $n=0,1,\ldots,m$
 and  all $j\in\mathbb{N}.$ Let $m+1= 2^p(2q+1)$ where $p,q\in\mathbb{N}_0.$ If $q=0$ then
 $j\cdot 2^s+m+1=j\cdot  2^{s-p}+1,$ so we get the case with $n=1$. Thus, we can suppose  $q\in\mathbb{N}.$
 Then $j\cdot 2^s+m+1=2^p(2^{s+l-p}(2k+1)+2q+1)$ and, for large enough $s$,  we can apply Lemma \ref{Jac2}:
$$a_{j\cdot 2^s+m+1}^2\,a_{j\cdot 2^s+m}^2\cdots a_{j\cdot 2^s+m-2^p+1}^2+a_{j\cdot 2^s+m-2^p}^2\cdots a_{j\cdot 2^s+m+1-2^{p+1}}^2= \|Q_{2^p}\|^2.$$

Here all indices, except the first, are of the form $j\cdot 2^s+n$ with $n<m+1.$ Therefore, by
induction hypothesis, $a_{j\cdot 2^s+n}^2\to a_n$ as $s\to \infty$ and
$$ (\lim_{s\rightarrow \infty} a_{j\cdot 2^s+m+1}^2)\,a_{m}^2\cdots a_{m-2^p+1}^2+a_{m-2^p}^2\cdots
a_{ m+1-2^{p+1}}^2= \|Q_{2^p}\|^2.$$
On the other hand, if we apply  Lemma \ref{Jac2} to the number $m+1,$ then we get the same equality
with $a_{m+1}^2$ instead of $\lim_{s\rightarrow \infty} a_{j\cdot 2^s+m+1}^2$. Since all $a_k$ are
positive, we have the desired result.
\end{proof}
\begin{remark} Since $\liminf a_n=0,$ by \cite{dombrowski}, the set $K(\gamma)$ with $\gamma_s\leq 1/6$  has zero Lebesgue measure.
\end{remark}

\section{Widom factors}
A finite Borel measure $\mu$ supported on a non-polar compact set $K\subset\mathbb{C}$ is said to be regular in the Stahl-Totik sense if $\displaystyle \lim_{n\rightarrow\infty}\|Q_n\|^{\frac{1}{n}}= Cap(K)$ where $Q_n$ is the monic orthogonal polynomial of degree $n$ corresponding to $\mu$. It is known (see, e.g. \cite{simon1,widom}) that the equilibrium measure is regular in the Stahl-Totik sense. While $\|Q_n\|^{\frac{1}{n}}\mathbin{/}Cap(K)$ has limit $1$, the ratio  $W_n=\|Q_n\|\mathbin{/}(Cap(K))^n$  may have various asymptotic behavior.
We call $W_n$ the {\it Widom factor} due to the paper \cite{widom2}. These values play an important role in spectral theory of orthogonal polynomials on several intervals.

 Let $\displaystyle E=[\alpha,\beta]\setminus \bigcup_{i=1}^n(\alpha_i,\beta_i)$ where $\alpha,\beta \in \mathbb{R}$ and the intervals $(\alpha_i,\beta_i)$ are disjoint subsets of $[\alpha,\beta]$. Let $d\mu=f(t)dt$ on $E$ and $(a_n)_{n=1}^{\infty}$ be the Jacobi parameters corresponding to $\mu.$ Then \cite{Chris}
\begin{align}\label{sze}
\int \log{f(t)}d\mu_E(t)>-\infty \iff \limsup_{n\rightarrow\infty}\frac{a_1\ldots a_n}{Cap(E)^n}>0.
\end{align}
 For further generalizations and different aspects of this result, see \cite{christiansen,Chris,Chris2,Peherstorfer,Sodin}.

We already know that $Cap(K(\gamma))=\exp{(\sum_{k=1}^{\infty}2^{-k}\log{\gamma_k})}.$  In terms of  \\$(\gamma_k)_{k=1}^\infty$ we can rewrite $\|Q_{2^s}\|$ as \begin{equation}\label{eq1}
 \frac{\sqrt{1-2\gamma_{s+1}}}{2}\,\exp{\left(2^s\sum_{k=1}^{s}2^{-k}\log{\gamma_k}\right)}.
\end{equation}
 Therefore, 
 \begin{equation}\label{eq2}
\displaystyle W_{2^s}= \frac{\sqrt{1-2\gamma_{s+1}}}{2\exp{\left(\sum_{k=s+1}^{\infty}2^{s-k}\log{\gamma_k}\right)}}\geq \sqrt{2},
\end{equation}
since $\gamma_s\leq 1/4.$ The limit values $\gamma_s = 1/4$ for all $s$ give the Widom Factors for
the equilibrium measure on $[0,1]$.

Clearly, \eqref{eq2} implies that $\limsup{W_n}>0.$ If $\gamma_s \leq 1/6$ for all $s$ then
\begin{equation}\label{eq3}
 W_{2^s}\geq \sqrt{6}.
\end{equation}
Let us show that, in this case, $\liminf{W_n}>0.$
\begin{theorem}Let $(W_n)_{n=1}^\infty$ be Widom factors for $\mu_{K(\gamma)}$ where $\gamma_s\leq 1/6$
 for all $s$. Then
\begin{enumerate}[label=(\alph*)]
\item $\displaystyle \,\,\liminf_{s\rightarrow\infty}W_{2^s}=\liminf_{n\rightarrow\infty}W_{n}.$\\
\item$\displaystyle \,\,\limsup_{n\rightarrow\infty}{W_n}=\infty$.
\end{enumerate}
\end{theorem}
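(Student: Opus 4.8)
The plan is to deduce both statements from a single estimate produced by a greedy dyadic decomposition of the index interval together with the lower bound of Lemma \ref{Jac3}(a). The quantity I would isolate is the elementary comparison
$$W_n \ge W_{2^s}\qquad\text{whenever } 2^s \le n < 2^{s+1},$$
together with its extremal refinement $W_{2^{s+1}-1}^2 \ge 3^{\,s}\,W_{2^s}^2$. Recalling $\|Q_n\|=a_1\cdots a_n$, so that $W_n^2/W_{2^s}^2=\prod_{i=2^s+1}^{n} a_i^2\big/Cap(K(\gamma))^{2(n-2^s)}$, everything reduces to bounding this block product from below.

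To establish the comparison, fix $n$ with $2^s\le n<2^{s+1}$ and expand $m=n-2^s=\sum_{j=1}^r 2^{t_j}$ in binary with $s>t_1>\cdots>t_r\ge 0$. This splits $(2^s,n]$ into consecutive blocks $[e_{j-1}+1,e_j]$ of length $2^{t_j}$, where $e_0=2^s$ and $e_j=2^s+2^{t_1}+\cdots+2^{t_j}$. The decisive bookkeeping step is to check that each right endpoint is an odd multiple of its own block length: since $s,t_1,\dots,t_{j-1}$ all exceed $t_j$, dividing $e_j$ by $2^{t_j}$ leaves $1$ plus an even integer, so $e_j=2^{t_j}(2k_j+1)$. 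Thus every block is exactly of the form governed by Lemma \ref{Jac3}(a), giving $\prod_{i=e_{j-1}+1}^{e_j} a_i^2 \ge \tfrac12\|Q_{2^{t_j}}\|^2$. Dividing by $Cap(K(\gamma))^{2m}=\prod_j Cap(K(\gamma))^{2\cdot 2^{t_j}}$ and using $\|Q_{2^{t}}\|^2=W_{2^{t}}^2\,Cap(K(\gamma))^{2\cdot 2^{t}}$ yields
$$\frac{W_n^2}{W_{2^s}^2}\ge\prod_{j=1}^r \tfrac12\,W_{2^{t_j}}^2\ge 3^{\,r}\ge 1,$$
where I invoke $W_{2^{t}}\ge\sqrt6$ from \eqref{eq3} (valid under $\gamma_s\le1/6$, the same hypothesis Lemma \ref{Jac3} requires). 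Hence $W_n\ge W_{2^s}$.

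Part (a) is then immediate. The inclusion $\{2^s\}\subset\mathbb{N}$ gives $\liminf_s W_{2^s}\ge\liminf_n W_n$ for free. Conversely, writing $s(n)$ for the integer with $2^{s(n)}\le n<2^{s(n)+1}$, we have $s(n)\to\infty$ and $W_n\ge W_{2^{s(n)}}$, so $\inf_{n\ge N}W_n\ge\inf_{s\ge\sigma(N)}W_{2^s}$ with $\sigma(N)\to\infty$; letting $N\to\infty$ gives $\liminf_n W_n\ge\liminf_s W_{2^s}$, and the two inequalities force equality. For part (b) I take the extremal index $n=2^{s+1}-1$: here $m=2^s-1$ has all $s$ binary digits equal to $1$, so $r=s$ and the displayed estimate gives $W_{2^{s+1}-1}^2\ge 3^{\,s}W_{2^s}^2\ge 6\cdot 3^{\,s}\to\infty$, whence $\limsup_n W_n=\infty$. (As an alternative route to (b) one may instead note $a_{2^s}\to 0$ by Theorem \ref{peri1} with $j=1,\,n=0$, and $W_{2^s-1}=W_{2^s}\,Cap(K(\gamma))/a_{2^s}\to\infty$ since $W_{2^s}\ge\sqrt6$.)

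The only genuinely delicate point is the alignment bookkeeping in the second paragraph: verifying that the greedy left-to-right dyadic blocks always terminate at odd multiples of their lengths. This is exactly what makes the sharp lower bound of Lemma \ref{Jac3}(a) — rather than the one-sided estimate (b) — applicable to every block, which in turn keeps each factor $\tfrac12 W_{2^{t_j}}^2\ge 3$ at least $1$. Once this structural fact is in place, both parts are routine.
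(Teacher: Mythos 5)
Your proof is correct and takes essentially the same route as the paper: the same greedy dyadic block decomposition of $(2^s,n]$ driven by the binary expansion of $n-2^s$, with Lemma \ref{Jac3}(a) applied to each block (your odd-multiple-of-block-length bookkeeping is exactly the alignment the paper relies on), the bound $W_{2^t}\geq\sqrt{6}$ from \eqref{eq3} turning each block factor into at least $3$, and the extremal index $n=2^{s+1}-1$ for part (b), matching the paper's use of $W_{2^s-1}$. Your alternative route to (b) via $a_{2^s}\to 0$ from Theorem \ref{peri1} is a valid bonus, but the core argument coincides with the paper's.
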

\begin{proof}\begin{enumerate}[label=(\alph*)]
\item We show that $W_n > W_{2^s}$ for $2^{s} < n<2^{s+1}$. Let $n=2^{s}+2^{s_1}+\ldots+2^{s_m}$
with  $s>s_1>s_2>\ldots>s_m\geq 0.$ Then we decompose $a_1 \cdots a_n$ into
groups $(a_1 \cdots a_{2^s}) \cdot ( a_{2^s+1}\cdots a_{2^s+2^{s_1}}) \cdots 
(a_{2^s+\cdots +2^{s_{m-1}}+1} \cdots a_n).$ For the first group we have
$a_1 \cdots a_{2^s}=||Q_{2^s}||.$ For the second group we use Lemma \ref{Jac3}(a) with 
$n=2^s+2^{s_1}: a_{2^s+1}\cdots a_{2^s+2^{s_1}} \geq ||Q_{2^{s_1}}|| /\sqrt 2.$ Similar estimation is valid for
all other groups. Therefore,
\begin{eqnarray*}W_n&=& \frac{a_1 \cdots a_{2^s}}{Cap(K(\gamma)^{2^s}}\,\,
\frac{ a_{2^s+1}\cdots a_{2^s+2^{s_1}}}{Cap(K(\gamma)^{2^{s_1}}} \cdots 
\frac{a_{2^s+\cdots +2^{s_{m-1}}+1} \cdots a_n}{Cap(K(\gamma)^{2^{s_m}}}\\
&\geq& W_{2^s} W_{2^{s_1}}\cdots W_{2^{s_m}} (\sqrt 2)^{-m},
\end{eqnarray*}
which exceeds $W_{2^s} (\sqrt 3)^{m}, $ by \eqref{eq3}. From here, 
$\min_{2^{s} \leq n<2^{s+1}}W_n=W_{2^s}$ and the result follows.\\

\item Applying the procedure above to $W_{2^s-1}$ and taking the limit gives the desired result. 
\end{enumerate}
\end{proof}

In order to illustrate the behavior of Widom factors, let us consider some examples. Suppose $\gamma_s\leq 1/6$
 for all $s$. 
 
\begin{example} If $\gamma_s \to 0$ then $W_n\to \infty.$ Indeed, 
$W_{2^s}\geq \frac{1}{\sqrt 6}\,\exp(\frac{1}{2}\,\log \frac{1}{\gamma_{s+1}}).$ 
\end{example}
\begin{example} There exists $\gamma_s \nrightarrow 0$ with $W_n\to \infty.$ 
Indeed, we can take $\gamma_{2k}=1/6, \,\gamma_{2k-1}=1/k.$
\end{example}
\begin{example} If $\gamma_s \geq c > 0$  for all $s$ then $\liminf_{n\rightarrow\infty} W_n\leq 1/2c$.
\end{example}
\begin{example} There exists $\gamma$ with $\inf \gamma_s=0$ and 
$\liminf_{n\rightarrow\infty} W_n <\infty.$
Here we can take $\gamma_s=1/6$ for $s\ne s_k$ and $\gamma_{s_k}=1/k$ for a sparse sequence
$(s_k)_{k=1}^{\infty}.$ Then $(W_{2^{s_k}})_{k=1}^{\infty}$ is bounded.
\end{example}

\section{Towards the Szeg\H{o} class}

The convergence of the integral on the left-hand side of \eqref{sze} defines
the Szeg\H{o} class of spectral measures for the finite gap Jacobi matrices. The Widom
condition on the right-hand side is the main candidate to characterize the
Szeg\H{o} class for the general case, see \cite{christiansen,Peherstorfer,Sodin}.

The equilibrium measure is the most natural measure in the theory of orthogonal polynomials.
In particular, for known examples, the values $\limsup W_n$ associated with equilibrium measures 
are bounded below by positive numbers. So we make the following conjecture:
\begin{Conjecture}If a compact set $K\subset\mathbb{R}$ is regular with respect to the Dirichlet problem then
the Widom condition $W_n  \nrightarrow 0$ is valid for the equilibrium measure $\mu_K.$
\end{Conjecture}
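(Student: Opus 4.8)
The plan is to reduce the conjecture to a lower bound for $\|Q_n\|_{L^2(\mu_K)}$ and to attack it by comparing the $L^2(\mu_K)$ extremal problem with the sup-norm (Chebyshev) extremal problem on $K$. Since $W_n=\|Q_n\|/Cap(K)^n$, the assertion $W_n\nrightarrow 0$ means exactly that $\limsup_n W_n>0$, so it suffices to bound $W_n$ from below away from $0$ along some subsequence. Dirichlet regularity of $K$ guarantees that $\mu_K$ is regular in the Stahl--Totik sense (see \cite{simon1,widom}), whence $\|Q_n\|^{1/n}\to Cap(K)$ and $W_n^{1/n}\to 1$; the entire difficulty lies in controlling the subexponential fluctuations of $W_n$.

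For the first step I would use the classical Chebyshev bound. Let $T_n$ denote the monic Chebyshev polynomial of $K$, that is, the monic polynomial of degree $n$ minimizing $\|\cdot\|_{L^\infty(K)}$, and set $t_n=\|T_n\|_{L^\infty(K)}$. Submultiplicativity $t_{m+n}\le t_m t_n$ together with Fekete's subadditive lemma gives $t_n^{1/n}\ge Cap(K)$, hence $t_n\ge Cap(K)^n$ for every $n$ (see \cite{ransford,saff}). Since $Q_n$ is monic of degree $n$ and $T_n$ is the sup-norm minimizer, we have $\|Q_n\|_{L^\infty(K)}\ge t_n\ge Cap(K)^n$. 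The second step is to pass from the sup-norm to the $L^2(\mu_K)$ norm via the Bernstein--Markov inequality, itself a consequence of Stahl--Totik regularity: for every $\varepsilon>0$ there is $C_\varepsilon$ with $\|P\|_{L^\infty(K)}\le C_\varepsilon e^{\varepsilon n}\|P\|_{L^2(\mu_K)}$ for all polynomials $P$ of degree at most $n$. Applying this to $P=Q_n$ and combining with the previous step yields
\[
Cap(K)^n\le \|Q_n\|_{L^\infty(K)}\le C_\varepsilon\, e^{\varepsilon n}\,\|Q_n\|_{L^2(\mu_K)},
\]
so that $W_n\ge C_\varepsilon^{-1}e^{-\varepsilon n}$ for every $\varepsilon>0$, i.e. $W_n\ge e^{-o(n)}$.

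The main obstacle is precisely that $e^{-o(n)}$ still permits $W_n\to 0$ (for instance like $e^{-\sqrt n}$): the subexponential Bernstein--Markov factor only reproduces the root asymptotics $W_n^{1/n}\to 1$ that regularity already supplies. To reach $W_n\nrightarrow 0$ one must replace the exponential loss $e^{\varepsilon n}$ by a bounded factor, and this cannot follow from Dirichlet regularity alone; it is exactly the point where the statement becomes conjectural. My plan to close the gap would be to exploit the special structure of the equilibrium measure rather than treat it as a generic regular measure: one route is a quantitative form of the zero-distribution convergence of Theorem \ref{totcheb}, combined with Christoffel-function estimates for $\mu_K$, to show that along a full-density subsequence the comparison between $\|Q_n\|_{L^\infty(K)}$ and $\|Q_n\|_{L^2(\mu_K)}$ costs only a bounded factor.

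An alternative, determinantal route is to use that $\prod_{k=0}^{n-1}\|Q_k\|^2$ equals the Gram (Hankel) determinant $D_{n-1}=\tfrac1{n!}\int\cdots\int\prod_{i<j}(x_i-x_j)^2\,d\mu_K(x_1)\cdots d\mu_K(x_n)$, so that transfinite-diameter asymptotics control the product $\prod_{k<n}W_k$, hence the Cesàro average $\tfrac1n\sum_{k<n}\log W_k\to 0$. The remaining and hardest task is to upgrade this control of averages to control of the individual terms $\log W_n$, i.e. to rule out long stretches of indices on which $a_k<Cap(K)$; averaging to zero by itself does not prevent the partial sums from drifting to $-\infty$. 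For the sets $K(\gamma)$ with $\gamma_s\le 1/6$ this fluctuation control is exactly what Lemma \ref{Jac3} provides, which is what makes \eqref{eq3} and the accompanying lower bounds provable there. The conjecture predicts that an analogous, purely potential-theoretic mechanism governs the fluctuations for every regular $K\subset\mathbb{R}$.
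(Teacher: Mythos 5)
You were asked to prove a statement that the paper itself presents as a \emph{Conjecture}: the paper offers no proof of it and poses it as an open problem, so there is no internal proof to compare your attempt against. Judged on its own terms, your submission is a research plan rather than a proof, and you say so yourself. Its sound part is the reduction you carry out: Dirichlet regularity of $K$ plus the Stahl--Totik regularity of $\mu_K$ (cited in Section 5 of the paper) gives the Bernstein--Markov property, the Chebyshev bound $\|Q_n\|_{L^\infty(K)}\geq t_n\geq Cap(K)^n$ is classical and correctly invoked, and together they yield $W_n\geq C_\varepsilon^{-1}e^{-\varepsilon n}$ for every $\varepsilon>0$. You are also right about exactly where this dies: $W_n\geq e^{-o(n)}$ is fully compatible with $W_n\to 0$, so the argument reproduces only the $n$-th root asymptotics that regularity already supplies and proves nothing new. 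Identifying that obstruction is honest and accurate, but it means the statement is not proved --- as must be the case, since it is open.

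One concrete warning about your fallback route, beyond the gap you flag yourself. The claim that transfinite-diameter asymptotics control $\prod_{k<n}W_k$ and hence give $\frac{1}{n}\sum_{k<n}\log W_k\to 0$ is not justified. The identity $\prod_{k=0}^{n-1}\|Q_k\|^2=D_{n-1}$ together with $\|Q_k\|^{1/k}\to Cap(K)$ only gives $\sum_{k<n}\log W_k=o(n^2)$, not $o(n)$; to obtain the Ces\`aro statement you would need a quantitative rate, e.g. $\log\delta_n-\log Cap(K)=o(1/n)$ for the Fekete diameters, or equivalently strong (not merely $n$-th root) asymptotics of the Hankel determinants, and nothing of the sort follows from regularity of $K$ alone. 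So your determinantal route contains two gaps, not one: the averaged statement itself is unproved, and the upgrade from averages to individual terms (ruling out long stretches with $a_k$ well below $Cap(K)$) is unproved on top of it. Your final observation is the correct way to situate the problem: for the sets $K(\gamma)$ with $\gamma_s\leq 1/6$ the needed fluctuation control is exactly what Lemma \ref{Jac3} and the bound \eqref{eq3} provide, and the conjecture asks whether a purely potential-theoretic substitute for that lemma exists for every Dirichlet-regular $K\subset\mathbb{R}$.
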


Concerning the Szeg\H{o} condition, one can conjecture
that, in the case of measures with non-polar perfect support $K$,  the left-hand side of \eqref{sze}
can be written as
\begin{equation}\label{Sz}
I(\mu):=\int \log (d\mu/d\mu_K)d\mu_K(t)>-\infty .
\end{equation}
Indeed, for the finite gap case, this coincides with the condition in \eqref{sze}, since  the integral
$\int \log (d\mu_K/dt)d\mu_K(t)$ converges. By Jensen's inequality (see also Section 4 in \cite{Chris}), the
value  $I(\mu)$ is nonpositive and it attains its maximum $0$ just in the case 
$\mu=\mu_K$ a.e. with respect to $\mu_K$.
On the other hand, there are strong objections to \eqref{Sz}, based on the numerical evidence
from \cite{kruger}, where, for the Cantor-Lebesgue measure $\mu_{CL}$ on the classical Cantor set $K_0$,
the Jacobi parameters $(a_n)$ were calculated for $n\leq 200.000.$ The Widom factors for such values
behave as a bounded below (by a positive number) sequence. Therefore, if we wish to preserve the Widom characterization of the Szeg\H{o} class, the integral $I(\mu_{CL})$ must converge, but, since $\mu_{CL}$ and
$\mu_{K_0}$ are mutually singular, it is not the case.

\end{document}